\newtheorem{theorem}{Theorem}
\newtheorem{lemma}[theorem]{Lemma}
\newtheorem{proposition}[theorem]{Proposition}
\newtheorem{corollary}[theorem]{Corollary}
\newtheorem{remark}[theorem]{Remark}
\newcommand{\sinc}{\text{sc}}
\newcommand{\Prob}{\mathbb P}
\newcommand{\E}{\mathbb E}
\newcommand{\eps}{\varepsilon}
\def\N{\mathbb N}
\def\R{\mathbb R}
\def\indicator{\mathbf{1}}
\author{Jean-Marc Aza\"{i}s
\qquad 
Federico Dalmao
\qquad
Jos\'e R. Le\'on\and
Ivan Nourdin
\qquad
Guillaume Poly
}
\title{Local universality of the number of zeros of random trigonometric polynomials with continuous coefficients}
\begin{document}
\maketitle
\begin{abstract}
Let $X_N$ be a random trigonometric polynomial of degree $N$
with iid coefficients and
let $Z_N(I)$ denote the (random) number of its zeros  lying in the compact interval $I\subset\R$. 
Recently, a number of important advances were made in the understanding 
of the asymptotic behaviour of $Z_N(I)$ as $N\to\infty$,  in the case of standard Gaussian coefficients.
The main theorem of the present paper is a universality result, that states that
the limit of $Z_N(I)$ does not really depend on the exact distribution of the coefficients of $X_N$. More precisely, assuming that these latter are iid with mean zero and unit variance and have a density satisfying certain conditions, we show that $Z_N(I)$
converges in distribution toward $Z(I)$, the number of zeros 
within $I$ of the centered stationary Gaussian process admitting the cardinal sine for covariance function.
\end{abstract}

\section{Introduction and main result}

Random polynomials are popular models in probability theory. They have found a lot of applications in several fields of physics, engineering and economics. 
In particular, there is a great variety of problems where the distribution of {\it zeros} of random polynomials occurs, including nuclear physics (in particular, random matrix theory), statistical mechanics or quantum mechanics, to name but a few; see, e.g., Bharucha-Reid and Sambandham \cite{BR} or Bogomolny, Bohigas and Leb{\oe}uf \cite{BBL} and references therein.

The most studied classes of random polynomials are the algebraic and trigonometric ensembles.  
As a matter of fact, 
it was rapidly observed that the behaviour of their zeros
exhibit important differences. 
For instance, both
the asymptotic mean and variance of the number of real roots of Kac algebraic polynomials 
are equivalent to $\log N$, while in the trigonometric case both the
asymptotic mean and variance of the number of  roots on $[0,2\pi]$ are equivalent to $N$, with $N$ the degree of the polynomial. See, e.g., \cite{granville} or \cite{far98} for precise statements and references.
Besides, 
for algebraic polynomials it often happens that the dominant term in its expansion is solely responsible for the limit; in contrast,  
in the trigonometric case generally each term  contributes infinitesimally.

One can find in \cite{BBL}
several reasons explaining 
the wide interest of scientists in random trigonometric polynomials.
For instance, in the quantum semiclassical limit 
one expects to have a large proportion of roots 
on, or close to, the unit circle in the complex plane. 
Under a certain natural sufficient condition 
on the coefficients of the random polynomials 
(self-invertibility), the authors were led to consider trigonometric polynomials.  

More specifically, throughout this paper we will deal with random trigonometric polynomials of the form
\begin{eqnarray}
P_N(t) = \sum_{n=1}^N 
\big\{
a_n\,
\cos(nt) + b_n\,
\sin(nt)
\big\} ,\quad t\in\R,
\label{P_N}
\end{eqnarray}
where the coefficients $a_n$ and $b_n$ are iid random variables that are normalised so that $\E[a_1]=0$ and $\E[a_1^2]=1$. 
The problem we want to study is the following:\\

{\bf Question Q}. {\it Fix  a small interval containing $t=0$. How does the number of zeros of $P_N$ lying in this interval behave as $N\to\infty$}?\\

In order to solve this question, we first have to find the right scale at which a non-degenerate limit may happen. 
This leads us to change $t$ into $\frac{t}{N}$ and to consider the following normalized version of $P_N$:
\begin{eqnarray}
X_N(t) = \frac{1}{\sqrt{N}}\sum_{n=1}^N 
\bigg\{
a_n\,
\cos\left(\frac{nt}{N}\right) + b_n\,
\sin\left(\frac{nt}{N}\right)
\bigg\} ,\quad t\in\R.
\label{eq:trig-pol}
\end{eqnarray}
We can now investigate the limit of the number of the zeros of $X_N$  lying in any compact interval $I\subset\R$. 
(Observe that the factor $1/\sqrt{N}$ in (\ref{eq:trig-pol}) is of course totally useless as far as zeros are concerned; but since it will play a role when passing to the limit later on, we found convenient to keep it in our definition of $X_N$.)

In the existing literature, a lot of investigations about the number of zeros of (\ref{P_N}) concerns the particular case where $a_1\sim N(0,1)$. This situation turns out to be more amenable to analysis;
indeed, $P_N$ (or $X_N$) is then Gaussian, centered and stationary. For instance, one can rely on 
 the Rice's formula to find that
the mean of the number $Z_N(I)$ of zeros of $X_N$ within the compact interval $I$ converges to $|I|/(\pi\sqrt{3})$, with $|I|$ the length of $I$. (We recall from the celebrated Gaussian Rice's formula that, for any centered stationary Gaussian process $X$ with variance 1, the mean of the number of zeros within any interval $I$ is given by $\frac{\sqrt{-r''(0)}}{\pi}|I|$, where $r(t-s)=\E[X(t)X(s)]$). 
Nevertheless, even in this Gaussian framework the analysis becomes increasingly harder when higher moments are concerned. For example, a prediction for the limit of the variance of the number of zeros was made in \cite{BBL} in 1996, but it was only a dozen years later that this claim was confirmed by Granville and Wigman \cite{granville} by combining techniques from probability theory, stochastic processes and harmonic analysis. 
Besides, the authors of \cite{granville} also showed that a central limit theorem (CLT) for the number of zeros of $X_N$ within $[0,2\pi N]$ holds.
Finally, we would like to conclude this very short picture of the existing results in the case $a_1\sim N(0,1)$ by mentioning the recent paper \cite{al} by Aza\"is and Le\'on, in which the authors make use of the Wiener chaos techniques to prove, more generally, a CLT for the number of crossings of any given level $u\in\R$ (see also \cite{adl} for a related study).

As we just explained, assuming  in (\ref{P_N}) or (\ref{eq:trig-pol}) that the coefficients of $X_N$ are Gaussian is of great help when dealing with the moments of the number of zeros, as it gives one access to a variety of tools and desirable properties.
In contrast, solving Question Q when $a_1\sim\frac{1}{2}(\delta_1+\delta_{-1})$ (that is, in the case where $a_1$ is distributed according to the Rademacher distribution) seems clearly out of reach of existing methods. However, 
empirical simulations (see Figure \ref{fig}) suggest that the number $Z_N$ of zeros of $X_N$
within any given compact interval $I$ exhibits a {\it universality phenomenon}; this leads us to formulate the natural following conjecture.\\

{\bf Conjecture C}.
{\it Assume that $a_1$ is square integrable with mean zero and unit variance.
Then the number of zeros of $X_N$ within any given compact interval $I\subset\R$ converges, as $N\to\infty$, to  the number of zeros 
of the centered stationary Gaussian process admitting the cardinal sine for covariance function, and this \underline{irrespective of the exact distribution of $a_1$}.\\
}

\begin{figure}[!h]\label{fig}
\begin{center}
\includegraphics[scale=.75,trim=2cm 7cm 0 7cm]{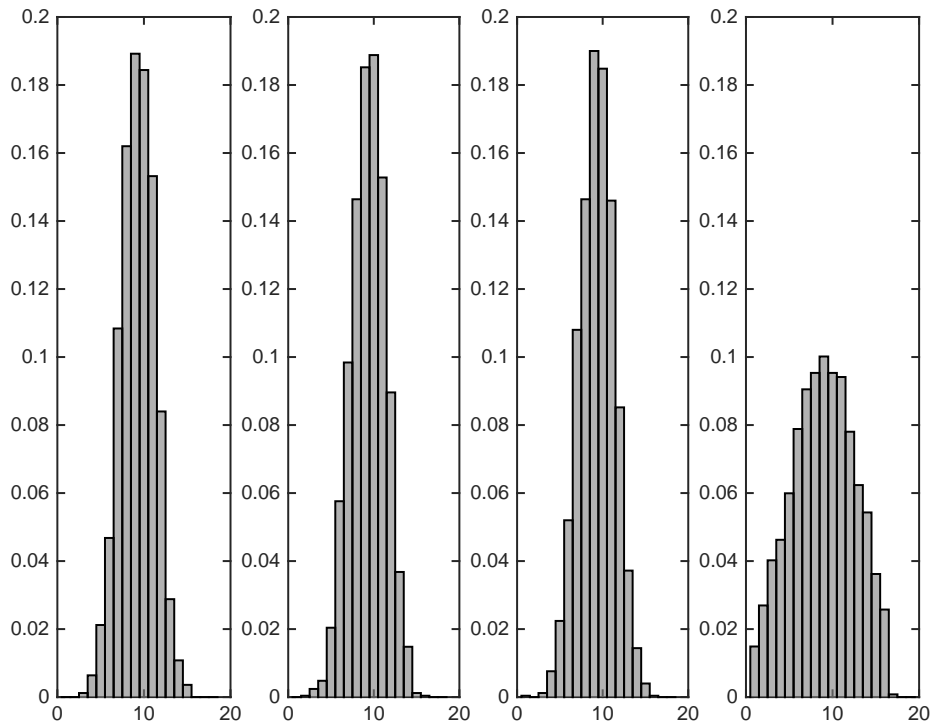}
\end{center}
\caption{\it Empirical distribution of the number of zeros of $X_{50}$ on the interval $[0,50]$, in four situations for the distribution of $a_1$. 
{\rm First}: Rademacher case. 
{\rm Second}: Uniform case on [-1,1] (scaled to have variance 1).
{\rm Third}: Gaussian case. 
{\rm Fourth}: Cauchy case.}
\end{figure}

At this stage, it is worth stressing that our universality conjecture concerns the {\it local} behavior of the number of zeros. Indeed, we are interested in the number of zeros of $X_N$ (not $P_N$) on a compact interval $I$ (for instance, $I=[0,50]$ like in Figure \ref{fig}). Another natural problem is to rather consider the {\it global} behavior, by looking this time at the number of zeros of $P_N$ on a compact interval $I$ or, equivalently, at the number of zeros of $X_N$ on $nI$. We refer to
the recent preprint \cite{ap} by Angst and Poly, for an analysis of the universality phenomenon for the {\it mean} number of zeros of $X_N$ on the interval $[0,n\pi]$.

Let us go back to the present paper.
Our main result, Theorem \ref{th} just below, provides a positive solution to our Conjecture C, in the case of {\it continuous} coefficients whose density satisfies some regularity and boundedness conditions.

\begin{theorem}\label{th}
Assume in (\ref{eq:trig-pol}) that $a_n,b_n$ are iid square integrable random variables and admit a density $\rho:\R\to (0,\infty)$
of the form $\rho=e^{-\Psi}$, and  that $\E[a_1]=0$ and $\E[a_1^2]=1$.
Assume furthermore that $\Psi$ is $C^{\infty}$ and satisfies
that
\begin{equation}\label{Hypo-densite}
\Psi^{(p)} \in \bigcap_{q\geq 1} L^q(e^{-\Psi(x)} dx),
\quad p\geq 1.
\end{equation}
Then, for any compact interval $I\subset\R$,
\begin{equation}\label{cv}
Z_{X_N}(I) \overset{\rm law}{\to} Z_W(I)\quad\mbox{as $N\to\infty$},
\end{equation}
 where, whenever $Y$ is a process, we denote by $Z_Y(I)$ the number of zeros of $Y$ within the interval $I$
and where $W$ is the centered stationary Gaussian process 
defined on $[0,\infty)$ with covariance function 
\begin{equation}\label{r}
\E[W(s)W(t)]=\sinc(t-s):=\frac{\sin(t-s)}{t-s}.
\end{equation}  
\end{theorem}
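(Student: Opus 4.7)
My plan is to upgrade the natural convergence $X_N \to W$ from finite-dimensional distributions to functional convergence in the $C^1(I)$ topology, and then to apply the continuous mapping theorem to the zero-counting functional
$\Phi(f) = \#\{t \in I : f(t) = 0\}$, once it is verified that $\Phi$ is almost surely continuous under the law of $W$.

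\textbf{Functional convergence of $X_N$ to $W$.} I would first establish finite-dimensional CLT for the vector $(X_N(t_j), X_N'(t_j))_{1 \leq j \leq m}$ by Lindeberg's theorem applied to the iid coefficients, whose contributions are uniformly bounded by fixed powers of $|t_j|$. A Riemann sum computation identifies the limiting covariance,
\begin{equation*}
\E[X_N(s) X_N(t)] = \frac{1}{N}\sum_{n=1}^N \cos\!\Big(\frac{n(t-s)}{N}\Big) \longrightarrow \int_0^1 \cos(u(t-s))\,du = \sinc(t-s),
\end{equation*}
and analogous identities for mixed moments pin down the limit as $(W, W')$. For tightness in $C(I)\times C(I)$, the elementary bounds $|\cos(nt/N) - \cos(ns/N)| \leq n|t-s|/N$ together with $\sum_{n=1}^N n^{2k} = O(N^{2k+1})$ give the second-moment increments $\E|X_N(t) - X_N(s)|^2 \leq C|t-s|^2$ and $\E|X_N'(t) - X_N'(s)|^2 \leq C|t-s|^2$. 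Kolmogorov's criterion then provides tightness of $(X_N, X_N')$ in $C(I)^2$, i.e.\ of $X_N$ in $C^1(I)$; combined with the finite-dimensional CLT this yields $X_N \to W$ weakly in $C^1(I)$.

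\textbf{Passage to the zero count.} The functional $\Phi$ is locally constant, hence continuous, on the open subset
$\mathcal{G} = \{f \in C^1(I) : f(\partial I) \neq 0 \text{ and } f(t)=0 \Rightarrow f'(t) \neq 0\}$:
near a transverse zero the implicit function theorem produces a unique zero of any $C^1$-small perturbation, while away from the finite transverse zero set of $f$ one has a uniform bound $|f| > \delta$ that persists under small $C^0$-perturbation. Under the law of $W$, each $W(t) \sim \mathcal{N}(0,1)$, so $\Prob(W \in \partial I\text{-zero set})=0$; moreover $r'(0) = 0$ makes $W(t)$ and $W'(t)$ independent centered Gaussians, and Bulinskaya's lemma then excludes tangential zeros in $I^{\circ}$ almost surely. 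Hence $\Prob(W \in \mathcal{G})=1$ and the continuous mapping theorem delivers $Z_{X_N}(I) \to Z_W(I)$ in distribution.

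\textbf{Main obstacle and role of hypothesis (\ref{Hypo-densite}).} The delicate step is the very last one: weak convergence in $C^1$ is, in principle, just enough to push through $\Phi$, but one must make sure that no probability mass escapes along sequences of paths $X_N$ exhibiting clusters of near-tangential zeros or nearly-vanishing behaviour at the endpoints of $I$. This is where the density assumption (\ref{Hypo-densite}) enters. Its smoothness and integrability unlock Malliavin-type integration by parts on the coefficient side — formally $\E[F'(a_n)] = \E[F(a_n)\Psi'(a_n)]$ — which I would use to prove that the joint law of $(X_N(t), X_N'(t))$, and more generally of $(X_N(t_1), X_N'(t_1), X_N(t_2), X_N'(t_2))$, admits a smooth density whose $L^\infty$-norm is bounded uniformly in $N$ and in the points. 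These bounds transplant to the non-Gaussian setting the Kac-Rice second-moment estimates routinely available in the Gaussian case, thereby controlling the probability that $X_N$ exhibits a bad near-tangential configuration and allowing one to close the continuous mapping argument rigorously.
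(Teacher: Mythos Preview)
Your approach via functional convergence in $C^1(I)$ followed by the continuous mapping theorem is correct, and it is a genuinely different route from the paper's. The paper proceeds by the method of moments: it expresses each factorial moment $\E\big[Z_{X_N}(I)^{[m]}\big]$ through Rice's formula, proves a local limit theorem (with uniform density bounds of the type $p_{N,\mathbf t}(\mathbf 0,\mathbf y)\le C_\eps\prod_i(1+y_i^4)^{-1}$) for the vector $(X_N(t_i),X_N'(t_i))_{i\le m}$ by Malliavin-style integration by parts on the coefficient space, passes to the limit inside the Rice integral by dominated convergence, and closes with Carleman's criterion showing that $Z_W(I)$ is moment-determined. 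The hypothesis \eqref{Hypo-densite} on $\Psi$ is essential to \emph{that} route, because it is precisely what makes the integration-by-parts operators $L$ and $\Gamma$ produce weights with moments of all orders and hence uniformly bounded smooth densities.

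Your last paragraph, however, rests on a misconception that you should remove. The continuous mapping theorem places \emph{no} hypothesis on the prelimiting laws: once you have $X_N\Rightarrow W$ in $C^1(I)$, Borel measurability of the zero-counting functional $\Phi$ (which follows, e.g., from writing $\Phi=\sup_{\eps\in\mathbb Q^+}\Phi_\eps$ with $\Phi_\eps(f)$ the maximal number of $\eps$-separated zeros, each $\Phi_\eps$ being upper semicontinuous), and $\Prob(W\in D_\Phi)=0$, the conclusion $\Phi(X_N)\Rightarrow\Phi(W)$ is automatic. There is nothing to prove about ``mass escaping through near-tangential configurations of $X_N$''. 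Since your Kolmogorov tightness bound and the Lindeberg CLT use only $\E[a_1^2]<\infty$, and Bulinskaya's lemma concerns only the Gaussian limit $W$, your argument nowhere invokes \eqref{Hypo-densite}. In other words, your outline actually yields the full Conjecture~C of the paper (arbitrary centered square-integrable coefficients), which is strictly more than Theorem~\ref{th}. What the paper's heavier machinery buys in exchange is genuine convergence of all moments $\E\big[Z_{X_N}(I)^m\big]\to\E\big[Z_W(I)^m\big]$, a conclusion your continuous-mapping argument does not deliver.
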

\begin{remark}
{\rm
As this stage, we would like to emphasize that the assumption (\ref{Hypo-densite}) is  general and actually contains a wide range of densities. Indeed, assume for instance the following two conditions on $\Psi$, that 
roughly express the fact that $\Psi$ diverges at a polynomial speed and that its derivatives have polynomial growths: 
\begin{itemize}
\item their exist $\alpha,c,M>0$ such that  
$\Psi(x) \ge c |x|^{\alpha}$ for all $|x|>M$;
\item for each $p\ge 0$, their exist $\beta_p,c_p >0$ such that $\left|\Psi^{(p)}(x)\right|\le c_p\left(1+|x|^{\beta_p}\right)$.
\end{itemize}
Then condition (\ref{Hypo-densite}) holds. Also, it is worth highlighting that densities of the form $e^{-\Psi}$ appear naturally as invariant measures of diffusions of the form $dX_t=d W_t-\Psi'(t) dt$. Indeed, the infinitesimal generator is $\frac12 \frac{d^2}{dx^2}-\Psi'(x) \frac{d}{dx}$.
}
\end{remark}

The heuristic behind Theorem \ref{th} is pretty simple. Using the classical CLT, it is easy to see that the finite dimensional marginals of  $X_N$ converge to those of $W$. Thus, expecting that (\ref{cv}) holds true looks reasonable.
However, due to the highly non-linear structure of the problem as well as the complexity of the relationships between zeros and coefficients, to transform this intuition  into a rigorous theorem is a challenging question.
To reach the conclusion of Theorem \ref{th}, we will use techniques from two apparently distinct fields, namely
Rice's formulas (which have been used extensively to study the zeros of random polynomials) 
 on one hand and local central limit theorems deduced by Malliavin calculus on the other one. 
The required smoothness and decay of the finite-dimensional densities of the process involved in the Rice's formulas will
be obtained by introducing a suitable formalism of integration by parts and will require a long and careful analysis.
Note that the idea of combining Rice and Malliavin techniques appeared for the first time in the work by Nualart and Wschebor \cite{nw}.

Let us be a little bit more precise on the technical details, by sketching the route we will follow in order to prove Theorem \ref{th}.
We will first check in Lemma \ref{prop:det-moments} 
that the distribution of $Z_W(I)$ is characterised by its moments.
As a result, in order to reach the conclusion of Theorem \ref{th} it will be enough to check that 
all the moments of $Z_{X_N}(I)$ converge, as $N\to\infty$, 
to the corresponding moments of $Z_W(I)$. 
For technical reasons, it will be convenient (as well as equivalent) to rather prove 
the convergence of {\it factorial} moments. 
In other words, we will  show that, 
for all integer $m\geq 1$, 
\begin{equation}\label{eq:conv-moments}
\E[Z_{X_N}(I) ^{[m]}]\to \E[Z_{W}(I) ^{[m]}]\, \mbox{ as } N\to\infty,
\end{equation}
where  $x^{[m]}=x(x-1)\ldots(x-m+1)$. 
The proof of (\ref{eq:conv-moments}) shall be done into two main steps of totally different natures:
\begin{enumerate}
\item[(i)]
Firstly, by means of the Rice's formula
we will give integral expressions for the factorial moments. To describe these expressions, we need to introduce some notation. Fix an integer $m\geq 1$ and consider, for $\eps>0$, ${\mathbf  t}=(t_1,\dots,t_m)\in I^m$
and ${\mathbf  x}=(x_1,\dots,x_m),{\mathbf  x}^\prime=(x^\prime_1,\dots,x^\prime_m)\in \R^m$,
\begin{itemize}
\item the set $D_m$ of hyper-diagonals,
\begin{equation}\label{hyper}
D_m=\{(t_1,\dots,t_m)\in I^m:\exists i\neq j / t_i=t_j\};
\end{equation}
\item the $\eps$-enlargement of $D_m$,
\begin{equation}\label{enlarge}
D^{\,\eps}_m=\{(t_1,\dots,t_m)\in I^m:\exists i\neq j / |t_i-t_j|<\eps\};
\end{equation} 
 \item the random vector
 \begin{equation}\label{vn}
V_N({\mathbf  t})=(X_N(t_1),\dots,X_N(t_m),X^{\prime}_N(t_1),\dots,X^{\prime}_N(t_m));
\end{equation}
\item and  (whenever it exists)  the density $p_{N,{\mathbf  t}}({\mathbf  x};{\mathbf  x}^\prime)$ of $V_N ({\bf t})$ at $({\mathbf  x},{\mathbf  x}')$.
\end{itemize}
Corollary \ref{cor:Rice-trunc} will basically state that, for $N$ large enough,
\begin{equation}\label{eq:Rice2}
\E[Z_{X_N}(I) ^{[m]}]
=\int_{I^m\setminus D^{\eps}_m}
\int_{\R^m}|y_1|\ldots |y_m | \,p_{N,{\mathbf  t}}
({\mathbf  0};{\mathbf  y})
d{\mathbf  y} d{\mathbf  t}+ O(\eps^{1/5}),
\end{equation}
where  ${\boldsymbol 0}=(0,\dots,0)\in\R^m$ and 
where the constant involved in the Landau notation  $O(\cdot)$
 is uniform with respect to $N$.
\item[(ii)]
Secondly, Proposition \ref{prop:conv} will establish a local limit theorem 
for the density $p_{N,{\mathbf  t}}$ appearing in \eqref{eq:Rice2}.
By passing to the limits as $N\to\infty$ (after swaping the limit and the integral by dominated convergence) and $\varepsilon\to0$,
 the conclusion of Theorem \ref{th} will follow.
\end{enumerate}

Throughout all the paper $(Const)$ denotes an unimportant universal constant whose value may change from one occurrence to another. When a constant depends  on some parameter, for example $\eps$,  we shall  denote it by $ C_\eps$.

The rest of the paper is organized as follows. 
The proof of Theorem \ref{th} is given in Section 2, whereas Section 3 gathers the most technical results. 

\section{Proof of Theorem \ref{th}}

Theorem \ref{th} is shown by means of the method of moments. 
More precisely, its proof is splitted into two steps:
\begin{enumerate}
\item Section \ref{S1} will first show that 
the distribution of $Z_W(I)$ is determined by its moments.
\item Section \ref{S2} will then show that the (factorial) moments
of $Z_{X_N}(I)$ converges to  the corresponding moments of $Z_W(I)$. 
\end{enumerate}

\subsection{The distribution of $Z_W(I)$ is determined by its moments}
\label{S1}
Recall from Theorem \ref{th} that $W$ is the centered stationary Gaussian process on $[0,\infty)$ admitting the cardinal sine for covariance function. 

\begin{lemma}\label{prop:det-moments}
All the moments of $Z_W(I)$ are finite. 
Furthermore, the law of $Z_W(I)$ is determined by its moments.
\end{lemma}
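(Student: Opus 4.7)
The plan is to establish both assertions of Lemma \ref{prop:det-moments} simultaneously by showing that $Z_W(I)$ has a finite exponential moment, which yields at once the finiteness of all integer moments and the moment-determinacy of the law.

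First I would exploit that, since $\sinc(t)=\frac{1}{2}\int_{-1}^{1}e^{it\lambda}\,d\lambda$, the spectral measure of $W$ is supported on $[-1,1]$ and in particular compactly supported. This is a classical sufficient condition guaranteeing that $W$ admits an almost sure extension to an entire function $\tilde W:\mathbb{C}\to\mathbb{C}$ of exponential type at most $1$: writing $W(t)=\int_{[-1,1]}e^{it\lambda}\,dZ(\lambda)$ for a complex Gaussian random measure $Z$, the integrand extends holomorphically in $t$ with uniform exponential-type bounds. Consequently, the zeros of $W$ on $\R$ are isolated and $Z_W(I)<\infty$ almost surely.

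To control the tail of $Z_W(I)$, I would fix $t_0$ in the interior of $I$, set $R=|I|$ (so $I$ is contained in the complex disk $\{z\in\mathbb{C}:|z-t_0|\leq R\}$), and apply Jensen's formula to $\tilde W$ on the disk of radius $2R$ around $t_0$. Writing $M_r:=\max_{|z-t_0|\leq r}|\tilde W(z)|$, this gives
\begin{equation*}
Z_W(I)\;\leq\;\#\{z\in\mathbb{C}:|z-t_0|\leq R,\,\tilde W(z)=0\}\;\leq\;\frac{1}{\log 2}\Bigl(\log M_{2R}-\log|W(t_0)|\Bigr).
\end{equation*}
The Borell-Sudakov-Tsirelson concentration inequality, applied to the real and imaginary parts of $\tilde W$ viewed as separable centered Gaussian processes on the compact set $\{|z-t_0|\leq 2R\}$, yields $\Prob(M_{2R}>u)\leq C_R\, e^{-c_R u^2}$ for $u$ large, provided $\E[M_{2R}]<\infty$ (which follows from the entire-function representation above). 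Combining with the anti-concentration bound $\Prob(|W(t_0)|<\eps)\leq \sqrt{2/\pi}\,\eps$, I obtain
\begin{equation*}
\Prob\bigl(Z_W(I)>k\bigr)\;\leq\;C_1 e^{-c_1 k^2}+C_2 e^{-c_2 k},\qquad k\geq k_0,
\end{equation*}
for constants depending only on $|I|$, and hence $\E[e^{\lambda Z_W(I)}]<\infty$ for some $\lambda>0$.

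Finiteness of the moment generating function on a neighborhood of $0$ implies at once finiteness of all integer moments of $Z_W(I)$ and is well known to imply moment-determinacy (either via real-analyticity of $\zeta\mapsto\E[e^{\zeta Z_W(I)}]$, or via Carleman's criterion, since one then has $\E[Z_W(I)^{2n}]^{1/(2n)}=O(n)$). The main obstacle is the justification of the Gaussian analytic extension $\tilde W$ and of the Borell-TIS estimate for its modulus, where one must verify separability of the complexified process and integrability of the supremum $M_{2R}$; these are standard in the theory of Gaussian analytic functions but require some care. A more elementary Kac-Rice alternative for the finiteness of each $\E[Z_W(I)^{[m]}]$ separately is available via the analyticity of $\sinc$ and a Cuzick-type non-degeneracy argument, but squeezing from it a quantitative growth in $m$ sharp enough for Carleman's criterion would then require additional combinatorial work.
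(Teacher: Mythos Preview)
Your argument is correct and takes a genuinely different route from the paper's. The paper does not work with $W$ directly: it first invokes the Nualart--Wschebor criterion for the finiteness of each moment, and then, to verify Carleman's condition, it uses the tail bound \eqref{ineq1} on $\Prob(Z_{X_N}(I)\geq k)$ for the \emph{approximating} trigonometric polynomials together with the convergence of factorial moments (Proposition~\ref{prop:conv}) to transfer the resulting growth estimate $\mu_{m}\leq (Const)(3m|I|^{3})^{m+1}$ to the limit. Your approach, by contrast, exploits the compact spectral support of $W$ to extend it almost surely to an entire Gaussian function, applies Jensen's formula together with Borell--TIS concentration and Gaussian anti-concentration, and obtains the stronger conclusion that $Z_W(I)$ has a finite exponential moment. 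This is self-contained (it does not lean on the rest of the paper's machinery) and conceptually cleaner; the paper's route has the advantage of staying within the Rice-formula framework already needed elsewhere and of avoiding the Paley--Wiener/GAF toolkit. One small point worth making explicit in your write-up is that $W(t_0)\neq 0$ almost surely (so Jensen's formula applies) and that the almost-sure continuity of $\tilde W$ on compacts guarantees $\E[M_{2R}]<\infty$ via Fernique's theorem before invoking Borell--TIS.
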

\begin{proof}
Using Nualart-Wschebor criterion (see \cite[Theorem 3.6, Corollary 3.7]{aw}), 
one has that all the moments of $Z_W(I)$ are indeed finite.
In order to show that the moments of $Z_W(I)$ determine its law, we use the well-known 
Carleman's condition, that we restate here for convenience.
\begin{lemma}[Carleman]
Let $Y$ be a real-valued random variable. 
If $\mu_m=\E(|Y|^m)<\infty$ for all $m\in\N$ 
and if
$$
\sum^\infty_{m=1} \mu_{2m}^{-\frac{1}{2m}}=\infty,
$$
then the law of $Y$ is determined by its moments.
\end{lemma}

Assume that the length $|I|$ of $I$ is greater than 1, otherwise let us bound it by $1$. 
Set $\mu_{m,N}=\E(Z_{X_N}(I)^m)$ and $\mu_{m}=\E(Z_W(I)^m)$. 
Using inequality \eqref{ineq1} below, we have
$$
\mu_{m,N}\leq (Const)\sum^\infty_{k=1}k^m\frac{|I|^{k-1/2}}{\sqrt{k!(k-1)!}}.
$$
Proposition \ref{prop:conv} below implies that
$\mu_m$ is the limit of $\mu_{m,N}$ as $N\to\infty$. As a consequence, 
$$
\mu_{m}\leq (Const)\sum^\infty_{k=1}k^m\frac{|I|^{k-1/2}}{\sqrt{k!(k-1)!}}.
$$
Now, if $k\leq 3m$ we use the rough bound
$$
k^m\frac{|I|^{k-1/2}}{\sqrt{k!(k-1)!}}\leq |I|^{-1/2}(3m|I|^{3})^m.
$$
If $k\geq 3m$, we use the bound
\begin{equation*}
k^m\frac{|I|^{k-1/2}}{\sqrt{k!(k-1)!}}
\leq \frac{k^m|I|^{k-1/2}}{(k-2m)^{2m-1/2}(k-2m)!}\\
\leq \frac{3^m|I|^{k-1/2}}{(k-2m)^{m-1/2}(k-2m)!}.
\end{equation*}
Hence,
$$
\mu_{m}\leq (3m)^{m+1}\,|I|^{3m-1/2}
+3^{m}|I|^{3m-1/2}R(m),
$$
where $R(m)=\sum^\infty_{k=0}\tfrac{|I|^{k}}{(k+m)^{m-1/2}(k+m)!}$ is bounded and decreasing with $m$. 
Thus,
$$
\mu_m\leq (Const)(3m|I|^3)^{m+1}.
$$
Therefore,
$$
\sum^\infty_{m=1}\mu^{-\frac{1}{2m}}_{2m}\geq
\sum^\infty_{m=1}\frac{1}{(3|I|^{3}2m)^{(2m+1)/2m}},
$$
which is indeed divergent. 
The desired result follows.
\end{proof}
 
 \subsection{Convergence of factorial moments}\label{S2}

We start with a basic fact about the non-degeneracy of the finite-dimensional distributions of the process $W$
appearing in Theorem \ref{th}. Recall also $D_m$ from (\ref{hyper}).
\begin{lemma}\label{lemma:non-deg}
For $\mathbf t=(t_1,\ldots,t_m) \in I^m$, let us consider the covariance matrix $\Sigma({\bf t})$ of the Gaussian vector 
\begin{equation*}
V(\mathbf t)=(W(t_1),\ldots,W(t_m);W'(t_1),\ldots,W'(t_m)).
\end{equation*}
If $t\not\in  D_m$
then $\Sigma({\bf t})$ is invertible. 
\end{lemma}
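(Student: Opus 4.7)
The plan is to leverage the spectral representation of $W$. Since $\sinc(u)=\tfrac{1}{2}\int_{-1}^{1} e^{i\lambda u}\,d\lambda$, the process $W$ has spectral measure $\mu(d\lambda)=\tfrac{1}{2}\indicator_{[-1,1]}(\lambda)\,d\lambda$, so it admits the stochastic integral representation
$$W(t)=\int_{-1}^{1} e^{it\lambda}\,dZ(\lambda),\qquad W'(t)=\int_{-1}^{1} i\lambda\,e^{it\lambda}\,dZ(\lambda),$$
where $Z$ is a suitable complex Gaussian orthogonal random measure and the differentiation is justified since $\lambda\mapsto\lambda\,e^{it\lambda}$ lies in $L^2(\mu)$.

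Invertibility of $\Sigma(\mathbf t)$ amounts to non-degeneracy of the Gaussian vector $V(\mathbf t)$, which in turn is equivalent to the following: for any real scalars $(\alpha_1,\ldots,\alpha_m,\beta_1,\ldots,\beta_m)\neq 0$, the variance of the linear combination $\sum_{j=1}^m\bigl(\alpha_j W(t_j)+\beta_j W'(t_j)\bigr)$ is strictly positive. By the isometry property of the spectral integral, this variance equals
$$\frac{1}{2}\int_{-1}^{1}\Bigl|\sum_{j=1}^m(\alpha_j+i\beta_j\lambda)\,e^{it_j\lambda}\Bigr|^2 d\lambda.$$
If this integral vanishes, then the function $f(\lambda):=\sum_{j=1}^m(\alpha_j+i\beta_j\lambda)\,e^{it_j\lambda}$ is identically zero on $[-1,1]$; since $f$ is entire, the identity theorem gives $f\equiv 0$ on $\mathbb{C}$.

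It then remains to show that $f\equiv 0$ forces all $\alpha_j$ and $\beta_j$ to vanish. Because the $t_j$ are pairwise distinct (recall $\mathbf t\notin D_m$), the $2m$ entire functions $\{e^{it_j\lambda},\,\lambda\,e^{it_j\lambda}\}_{1\le j\le m}$ are linearly independent over $\mathbb{C}$: they form a basis of the $2m$-dimensional kernel of the linear differential operator $\prod_{j=1}^m(D-it_j)^2$ (with $D=d/d\lambda$), each $e^{it_j\lambda}$ being annihilated by $(D-it_j)^2$ and $\lambda\,e^{it_j\lambda}$ being annihilated by $(D-it_j)^2$ but not $(D-it_j)$. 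Hence $\alpha_j=\beta_j=0$ for all $j$, which yields the claimed invertibility.

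The argument is essentially classical. The only step that requires any care is the passage from vanishing on the bounded interval $[-1,1]$ (the support of the spectral measure) to vanishing on all of $\mathbb{C}$ via analyticity; without this step one would be unable to invoke the linear independence of exponential–polynomial combinations, a statement that genuinely requires the full real line rather than a bounded piece of it.
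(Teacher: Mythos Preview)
Your proof is correct and follows essentially the same route as the paper: both reduce the quadratic form $\mathbf z^T\Sigma(\mathbf t)\,\mathbf z$ to the spectral integral $\tfrac12\int_{-1}^1\bigl|\sum_j(\alpha_j+i\beta_j\lambda)e^{it_j\lambda}\bigr|^2\,d\lambda$, then use analyticity to pass from vanishing on $[-1,1]$ to vanishing identically, and finally invoke the linear independence of the exponential--polynomial family when the $t_j$ are distinct. The only difference is presentational: the paper computes the quadratic form directly from the covariance entries whereas you use the stochastic integral isometry, and you spell out the linear independence via the kernel of $\prod_j(D-it_j)^2$ where the paper simply asserts it.
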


\begin{proof}
We shall use the method of Cram\'{e}r and Leadbetter   \cite{cl}, 
see also Exercise 3.4 in Aza\"\i s and Wschebor  \cite{aw}. 
Note that the spectral density of $X$ is $ f(\lambda)=\frac12\,\mathbf 1_{[-1,1]}(\lambda)$. 
We want to study the strict positiveness of
$F({\mathbf z})={\mathbf z}^T\,\Sigma\,{\mathbf z},$ 
where
$$
\mathbf z
=(\mathbf z_1,\mathbf z_2)=(z^1_1,z^1_2,\ldots,z^1_m;z^2_1,z^2_2,\ldots,z^2_m).
$$
With obvious notation, it holds that
$$
F({\mathbf z})= \mathbf z_1^T\,\Sigma_{11}\,\mathbf z_1+\mathbf z_1^T\,\Sigma_{12}\, {\bf z}_2+\ {\bf z}_2^T\,\Sigma_{21}\,\mathbf z_1+\mathbf z_2^T\,\Sigma_{22}\,\mathbf z_2.
$$
We have
\begin{eqnarray*}
2\,\mathbf z_1^T\,\Sigma_{11}\,\mathbf z_1&=&\int_{-1}^1\sum_{j=1}^m\sum_{l=1}^m e^{i(t_l-t_j)\lambda}z_l^1z_j^1d\lambda=\int_{-1}^1\left|\sum_{l=1}^de^{i t_l\lambda}z^1_l\right|^2 d\lambda\\
&=:&
\int_{-1}^1|P_1( \mathbf t,\mathbf z_1,\lambda)|^2 d\lambda,\\
2\,\mathbf z_2^T\,\Sigma_{22}\,\mathbf z_2&=&\int_{-1}^1\sum_{j=1}^m\sum_{l=1}^m e^{i(t_l-t_j)\lambda}z_l^2z_j^2\lambda^2 d\lambda=:\int_{-1}^1|P_2( \mathbf t,\mathbf z_2,\lambda)|^2\lambda^2 d\lambda,\\
2\,\mathbf z_1^T\,\Sigma_{12}\,\mathbf z_2&=&\int_{-1}^1\sum_{j=1}^m\sum_{l=1}^m e^{i(t_l-t_j)\lambda}z_l^1z_j^2 i\lambda d \lambda
=\int_{-1}^1 P_1( \mathbf t,\mathbf z_1,\lambda)
\overline{P_2( \mathbf t,\mathbf z_2,\lambda)}i\lambda d\lambda,\\
2\,\mathbf z_2^T\,\Sigma_{21}\,\mathbf z_1&=&-\int_{-1}^1 P_2( \mathbf t,\mathbf z_2,\lambda)
\overline{P_1( \mathbf t,\mathbf z_1,\lambda)}i\lambda d\lambda.
\end{eqnarray*}
As a result,
$$
F(\mathbf z)=\frac12\int_{-1}^1 |P_1( \mathbf t,\mathbf z_1,\lambda)-iP_2( \mathbf t,\mathbf z_2,\lambda)\lambda|^2 d\lambda.
$$ 
Since the analytic function inside the square in the right-hand side of the previous identity cannot vanish almost everywhere
when the $t_i$'s are pairwise distinct and ${\bf z}\neq 0$, we deduce that
$F({\bf z})>0$ for all ${\bf z}\neq 0$. This is the desired conclusion. 
\end{proof}

Apart from the most technical proofs that are postponed in Section 3, we are now in a position to check the convergence of moments.
\begin{proposition}\label{prop:conv}
For all $m\in\N$, one has
\begin{equation*}
\E[Z_{X_N}(I)^{[m]}]\to \E[Z_{W}(I)^{[m]}]\quad \mbox{as $N\to\infty.$}
\end{equation*}
\end{proposition}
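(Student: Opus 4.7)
The plan is to combine the Rice integral representation of Corollary \ref{cor:Rice-trunc} with a local central limit theorem (in the sense of densities) for the Gaussian vector $V_N(\mathbf{t})$ of (\ref{vn}). Applying Corollary \ref{cor:Rice-trunc} to $X_N$, and verifying (using Lemma \ref{lemma:non-deg}) that an analogous formula holds for the Gaussian reference process $W$, the proof reduces to showing that for each fixed small $\varepsilon>0$
\[
\int_{I^m\setminus D_m^{\varepsilon}}\int_{\R^m}|y_1|\cdots|y_m|\,p_{N,\mathbf{t}}(\mathbf{0};\mathbf{y})\,d\mathbf{y}\,d\mathbf{t}
\;\longrightarrow\;
\int_{I^m\setminus D_m^{\varepsilon}}\int_{\R^m}|y_1|\cdots|y_m|\,p_{\mathbf{t}}^{W}(\mathbf{0};\mathbf{y})\,d\mathbf{y}\,d\mathbf{t},
\]
where $p_{\mathbf{t}}^{W}$ is the joint density of $V(\mathbf{t})$. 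Letting $\varepsilon\to 0$ afterwards will be harmless, thanks to the uniform $O(\varepsilon^{1/5})$ remainder on both sides.

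The heart of the matter is to establish, for every fixed $\mathbf{t}\in I^m\setminus D_m$, the pointwise convergence $p_{N,\mathbf{t}}(\mathbf{0};\mathbf{y})\to p_{\mathbf{t}}^{W}(\mathbf{0};\mathbf{y})$. The classical multivariate CLT, applied to the linear combinations of the $a_n$'s and $b_n$'s producing $V_N(\mathbf{t})$, yields only convergence in distribution to $V(\mathbf{t})$; to upgrade this to a genuine local CLT one must exploit the smoothness of the density $\rho=e^{-\Psi}$. This is precisely where the hypothesis (\ref{Hypo-densite}) enters: by mimicking Malliavin calculus in the spirit of \cite{nw}, an integration-by-parts formula against the weight $\Psi'$ produces inverse Fourier representations of $p_{N,\mathbf{t}}$ with integrable kernels, and the assumed decay of $\Psi^{(p)}$ then provides the required pointwise and uniform estimates. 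I would present this as a separate technical lemma, whose proof is deferred to Section~3.

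Once the pointwise convergence of densities is granted, I would pass to the limit inside the double integral by dominated convergence. Since $\mathbf{t}\notin D_m^{\varepsilon}$, a quantitative version of Lemma \ref{lemma:non-deg} shows that the covariance matrix $\Sigma_N(\mathbf{t})$ of $V_N(\mathbf{t})$ has eigenvalues bounded below uniformly in $N$ (large enough) and in $\mathbf{t}\in I^m\setminus D_m^{\varepsilon}$; combined with the Malliavin-based density bounds above, this yields a Gaussian-type envelope
\[
p_{N,\mathbf{t}}(\mathbf{0};\mathbf{y})\;\le\; C_\varepsilon\, e^{-c_\varepsilon|\mathbf{y}|^{2}}
\]
uniform in $N$ and $\mathbf{t}$. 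The function $|y_1|\cdots|y_m|\,C_\varepsilon e^{-c_\varepsilon|\mathbf{y}|^2}$ is integrable over the compact $\mathbf{t}$-domain, so dominated convergence applies. Finally, sending $\varepsilon\to 0$ and controlling both error terms by $O(\varepsilon^{1/5})$ delivers $\E[Z_{X_N}(I)^{[m]}]\to\E[Z_W(I)^{[m]}]$.

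The main obstacle is clearly the local CLT step: ordinary CLT arguments are insufficient, and one must produce uniform (in $N$ and in $\mathbf{t}$ away from the diagonal) smoothness and decay estimates on the joint density of $V_N(\mathbf{t})$. Building the appropriate integration-by-parts formalism under hypothesis (\ref{Hypo-densite}), and verifying that the resulting bounds have constants depending only on $\varepsilon$ rather than on $N$ or on the precise location of $\mathbf{t}$, is the long and careful analysis alluded to in the introduction.
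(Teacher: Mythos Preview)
Your overall architecture coincides with the paper's: apply Corollary~\ref{cor:Rice-trunc}, establish a local limit theorem for the density $p_{N,\mathbf t}$ on $I^m\setminus D_m^{\varepsilon}$, pass to the limit under the integral by dominated convergence, and finally let $\varepsilon\to 0$. Two points in your sketch, however, do not match what is actually provable under the stated hypotheses and differ from the paper's execution.

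First, the Gaussian envelope $p_{N,\mathbf t}(\mathbf 0;\mathbf y)\le C_\varepsilon e^{-c_\varepsilon|\mathbf y|^2}$ is too strong. The coefficients $a_n$ are only assumed square-integrable, with no sub-Gaussian moment control, and the Malliavin integration-by-parts machinery produces only polynomial decay of arbitrary fixed order. The paper's Lemma~\ref{lemma:llt} gives
\[
p_{N,\mathbf t}(\mathbf x,\mathbf y)\le \frac{C_\varepsilon}{(1+y_1^4)\cdots(1+y_m^4)},
\]
which is weaker than your claimed bound but still integrable against $|y_1|\cdots|y_m|$ over the compact $\mathbf t$-domain, so dominated convergence goes through. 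You should replace the Gaussian envelope by this polynomial one.

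Second, the paper does not obtain the pointwise convergence $p_{N,\mathbf t}\to \gamma_{\mathbf t}$ via inverse Fourier representations. Instead, Lemma~\ref{lemma:llt} also furnishes a uniform gradient bound $\|\nabla p_{N,\mathbf t}\|_\infty\le C_\varepsilon$, so the family $(p_{N,\mathbf t})_{N\ge N_0}$ is equibounded and equicontinuous. The ordinary multivariate CLT gives $V_N(\mathbf t)\Rightarrow N(0,\Sigma(\mathbf t))$; combining this with Arzel\`a--Ascoli and Scheff\'e, every subsequential uniform limit on compacts must be $\gamma_{\mathbf t}$, hence the whole sequence converges. This soft argument avoids any Cram\'er-type condition on the characteristic function of $a_1$, which a direct Fourier approach would typically require and which is not among the hypotheses.
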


\begin{proof}
Fix $m\in\mathbb{N}^*$ and a compact interval $I\subset\R$. By the forthcoming Corollary \ref{cor:Rice-trunc} we know that, for any $\eps>0$,
\begin{eqnarray*}
\E\left(Z_{X_N}(I)^{[m]}\right)
&=&\int_{I^m\setminus D^{\,\eps}_m}\int_{\R^m}\prod_{i=1}^m |y_i| 
p_{N,{\textbf t}}(\textbf{0},\textbf{y})d\textbf{y}d\textbf{t}+O(\eps^{\frac{1}{5}}),
\end{eqnarray*}
where $p_{\textbf{t},N}(\textbf{0},\textbf{y})$ is the density of the vector $V_N({\textbf t})$ (see (\ref{vn}))
evaluated at the point $(0,\ldots,0;y_1,\ldots,y_m)$, and $O(\cdot)$ is uniform with respect to $N$.
In order to achieve the proof we shall use a  {\it local} central limit theorem to guarantee the pointwise convergence of $p_{N,{\textbf t}}$ and then take the limit under the integral.

Denote by $\gamma_{{\bf t}}$ the density of the centered Gaussian vector with covariance $\Sigma({\bf t})$, where ${\bf t}$ and $\Sigma({\bf t})$ are like in Lemma \ref{lemma:non-deg}.
Thanks to Lemma \ref{lemma:llt}, the sequence of functions $(p_{N,{\textbf t}})_{N\geq N_0}$ 
is equicontinuous and equibounded. 
Moreover, since by the CLT one has that $V_N({\bf t})$ converges toward $N(0,\Sigma({\bf t}))$, the limit of any convergent subsequence
of $(p_{N,{\textbf t}})_{N\geq N_0}$ is necessarily $\gamma_{{\bf t}}$
by the Scheff\'e theorem.
By a compactness argument, it follows that $p_{N,{\textbf t}}$ converges uniformly on each compact of $\R^{2m}$ towards $\gamma_{{\bf t}}$. 
Besides, the bound \eqref{controleinfini} gives the suitable domination to pass the limit under the integral sign in the Rice's formula \eqref{eq:Rice2}. 
We obtain
\begin{equation}\label{presquefini}
\int_{I^m\setminus D^{\,\eps}_m}
\int_{\R^m}\prod_{i=1}^m |y_i| p_{N,{\textbf t}}(\textbf{0},\textbf{y})d\textbf{y}d\textbf{t}
\mathop{\to}\limits_{N\to\infty} 
\int_{I^m\setminus D^{\,\eps}_m}\int_{\R^m}\prod_{i=1}^m |y_i| \gamma_{{\bf t}}(\textbf{0},\textbf{y}) d\textbf{y} d\textbf{t}.
\end{equation}
It remains to let $\eps \to 0$ and we reach the desired result, namely
\begin{eqnarray*}
\lim_{N\to\infty}\E\left(Z_{X_N}(I)^{[m]}\right)&=&\int_{I^m}\int_{\R^m}\prod_{i=1}^m |y_i| \gamma_{{\bf t}}(\textbf{0},\textbf{y}) d\textbf{y} d\textbf{t}\\
&=&\E(Z_{W}(I)^{[m]}).
\end{eqnarray*}
\end{proof}

\section{Auxiliary results}

\subsection{Upper bounds for the density $p_{N,{\bf t}}$ and its gradient}

The next proposition provides useful bounds for the density $p_{N,{\bf t}}$ (as well as its gradient) of $V_N({\bf t})$, given by (\ref{vn}), in the case where $N$ is large enough and ${\bf t}$ does not belong to the $\eps$-enlargement of the set $D_m$ of hyper-diagonals.

\begin{lemma}\label{lemma:llt} 
Fix $\eps>0$. 
Then,  for any ${\bf t}\in I^m\setminus{D\eps_m} $ and any $N$ large enough (bigger or equal than $N_0$, say), the density $p _{N, \mathbf t}$ exists, is $\mathcal{C}^1$ and satisfies, for all $\mathbf{x},\mathbf{y}\in\R^{m}$,
\begin{eqnarray}\label{controleinfini}
\sup_{ \mathbf t \in I^m\setminus{D_\eps^m}}\,\,\sup_{N\geq N_0} \,\,
p _{N, \mathbf t}(\mathbf{x},\mathbf{y}) &\le& \frac{C_\eps }{(1+y_1^4)\cdots (1+y_{m}^4)}\leq C_\eps;\\
\label{lipschitz}
\sup_{ \mathbf t \in I^m\setminus{D_\eps^m}}\,\,\sup_{N\geq N_0} \,\,
\|\nabla p_{N, \mathbf t} \|_\infty &\le& C_\eps.
\end{eqnarray}
We recall that $C_\eps$ denotes a constant that only depends on $\eps$ and whose  value may change from one occurence to another.
\end{lemma}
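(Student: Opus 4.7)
My plan is to leverage the non-Gaussian Malliavin calculus naturally attached to the smooth density $\rho=e^{-\Psi}$. Since the coefficients $\xi=(a_n,b_n)_{n=1}^N$ are i.i.d.\ with smooth density, for every smooth $\varphi$ of moderate growth the integration-by-parts (IBP) formula $\E[\partial_i\varphi(\xi)]=\E[\varphi(\xi)\Psi'(\xi_i)]$ holds. The key structural observation is that $V_N(\mathbf t)=A_N(\mathbf t)\,\xi$ is \emph{linear} in $\xi$ for an explicit $2m\times 2N$ matrix $A_N(\mathbf t)$ whose entries are of order $N^{-1/2}$. In particular the Malliavin covariance matrix coincides with the usual covariance $\Gamma_N(\mathbf t)=A_N(\mathbf t)A_N(\mathbf t)^\top$.

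The first step is to establish the uniform non-degeneracy $\Gamma_N(\mathbf t)\succeq \kappa_\varepsilon I$ for $\mathbf t\in I^m\setminus D_m^\varepsilon$ and $N\ge N_0(\varepsilon)$. The entries of $\Gamma_N(\mathbf t)$ are Riemann sums which converge uniformly on $I^m$ to the entries of the limiting covariance matrix $\Sigma(\mathbf t)$ from Lemma~\ref{lemma:non-deg}. Since $\mathbf t\mapsto\Sigma(\mathbf t)$ is continuous and $\Sigma$ is invertible on the compact set $I^m\setminus D_m^\varepsilon$, its smallest eigenvalue is uniformly bounded below there, and uniform convergence transfers this bound to $\Gamma_N$, giving also uniform control of $\|\Gamma_N(\mathbf t)^{-1}\|$.

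With $\Gamma_N$ invertible, iterating the Malliavin IBP formula $2m$ times (once per coordinate) produces the standard density representation
\begin{equation*}
p_{N,\mathbf t}(\mathbf z)=\E\bigl[\mathbf{1}_{V_N(\mathbf t)\ge \mathbf z}\,H_{N,\mathbf t}(\xi)\bigr],
\end{equation*}
where $H_{N,\mathbf t}(\xi)$ is an explicit polynomial expression in entries of $\Gamma_N^{-1}$, entries of $A_N$, and $\Psi^{(k)}(\xi_\ell)$ for $k\le 2m$. The random variables $\Psi'(\xi_\ell)$ are independent across $\ell$, centered (since $\int\Psi'\,e^{-\Psi}dx=0$), and all $L^q$ moments of $\Psi^{(k)}(\xi_\ell)$ are finite by hypothesis~(\ref{Hypo-densite}). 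Combined with the $N^{-1/2}$ scaling of $A_N$'s entries, which makes the coefficient vectors entering $H_{N,\mathbf t}$ satisfy $\sum_\ell c_\ell^2=O(1)$ and $\sum_\ell |c_\ell|^q=O(N^{-(q-2)/2})$ for $q>2$, a Rosenthal/Burkholder-type inequality applied to the $\ell$-sums yields $\|H_{N,\mathbf t}\|_{L^q}\le C_{\varepsilon,q}$ uniformly in $N$ and $\mathbf t$. This immediately gives $p_{N,\mathbf t}\le C_\varepsilon$, and one additional IBP yields the Lipschitz bound (\ref{lipschitz}).

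Producing the polynomial decay factor $\prod_i(1+y_i^4)^{-1}$ is the delicate part. The plan is to extend the IBP procedure, inserting four extra derivations in each of the directions $m+1,\dots,2m$ with a judiciously chosen test function, so as to obtain a representation of the form
\begin{equation*}
\prod_{i=1}^m(1+y_i^4)\,p_{N,\mathbf t}(\mathbf x,\mathbf y)=\E\bigl[\mathbf{1}_{V_N(\mathbf t)\ge(\mathbf x,\mathbf y)}\,K_{N,\mathbf t}(\xi)\bigr],
\end{equation*}
in which $K_{N,\mathbf t}$ is again a polynomial in $\Gamma_N^{-1}$, $A_N$, and $\Psi^{(k)}(\xi_\ell)$ for $k\le 2m+4$; one then controls $\|K_{N,\mathbf t}\|_{L^1}$ uniformly by the same Rosenthal-based bookkeeping. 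The main obstacle, in my view, is precisely this bookkeeping: the four extra derivatives in each $y_i$ create many cross-terms which a priori have worse $N$-scaling, and controlling them uniformly requires carefully exploiting independence of the $\xi_\ell$'s, the centering of $\Psi'(\xi)$, and the $L^q$ bounds on the $\Psi^{(k)}$ furnished by (\ref{Hypo-densite}).
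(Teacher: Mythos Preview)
Your approach is essentially the paper's: both set up a Malliavin-type integration by parts with respect to the product measure $\prod\rho(\xi_i)\,d\xi_i$, both exploit the crucial fact that the Malliavin matrix of $V_N(\mathbf t)$ is \emph{deterministic} (the paper singles this out in a remark) and converges uniformly on $I^m$ to $\Sigma(\mathbf t)$, and both control the resulting Malliavin weights via martingale-type inequalities (the paper uses Burkholder--Davis--Gundy where you invoke Rosenthal). The paper packages the iterated IBP through Malliavin's abstract density criterion applied to a sequence of signed measures, rather than your explicit representation $p=\E[\mathbf 1_{V\ge \mathbf z}\,H]$, but this is cosmetic.

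The one place where the paper is cleaner than your sketch is the polynomial decay~(\ref{controleinfini}). You propose ``inserting four extra derivations'' in the $y_i$-directions and flag the bookkeeping of cross-terms with possibly bad $N$-scaling as the main obstacle. The paper sidesteps this: rather than extra derivations, it simply multiplies by the polynomial weight $Q(\mathbf x,\mathbf y)=\prod_i(1+y_i^4)$ \emph{before} performing the very same integrations by parts, applying the density criterion to the weighted measure $Q(V_N(\mathbf t))\cdot\mathrm{law}(V_N(\mathbf t))$. Since $Q(V_N)$ is a polynomial in the components of $V_N$, the resulting weight $H_N$ again lies in the algebra generated by a structured class $\mathcal A_\infty(N,\mathbf t)$ (built inductively by closing under the operators $\Gamma$ and $L$), whose elements are shown once and for all---via an explicit inductive description plus BDG for the $p=\tfrac12$ case---to have $L^q$-moments bounded uniformly in $N$ and $\mathbf t\in I^m$. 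In this organization no cross-terms with worse $N$-scaling appear, so the obstacle you anticipate does not materialize.
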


\begin{remark}
{\rm
A careful inspection of the proof would show that the denominator in the right-hand side of (\ref{controleinfini}) may be replaced by any other polynomial in ${\bf x}$ and ${\bf y}$.
But because we will only need the one appearing in (\ref{controleinfini}), 
for simplicity we decided not to  state our Lemma \ref{lemma:llt} in such a general setting.
}
\end{remark}

Our proof of Lemma \ref{lemma:llt} will heavily rely on the use of the following
celebrated theorem due to Paul Malliavin \cite{Mallia}.
\begin{theorem}[Malliavin]\label{paul}
Let $\mu$ be a finite signed measure over $\R^{2m}$.
\begin{itemize}
\item[(i)]If, for all $i=1,\ldots,2m$ there exists a constant $C_i$ satisfying that for any $\phi \in \mathcal{C}^\infty_c(\R^{2m},\R)$, 
\begin{equation}\label{criteredensite}
\left|\int_{\R^{2m}} \partial_i \phi (x) d\mu(x)\right|\leq C_i \|\phi\|_\infty,
\end{equation}
then $\mu$ is absolutely continuous with respect to Lebesgue measure of $\R^{2m}$.
\item[(ii)] If, for any  multi-index $\alpha$ there exists a constant $C_\alpha>0$ satisfying that for any $\phi \in \mathcal{C}^\infty_c(\R^{2m},\R)$,
\begin{equation}\label{regudensi}
\left|\int_{\R^{2m}} \partial_\alpha \phi (x) d\mu(x)\right|\leq C_\alpha \|\phi\|_\infty,
\end{equation}
then $\mu$ admits a density in the class $\mathcal{C}^\infty_b(\R^{2m},\R)$ of $C^\infty$ functions which are bounded together with all its derivatives.
\end{itemize}
\end{theorem}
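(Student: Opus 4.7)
The plan is to transcribe the distributional hypotheses into statements asserting that certain derivatives of $\mu$ are themselves finite signed measures, and then reach the conclusions through mollification plus a Sobolev embedding for (i), and through Fourier analysis for (ii). More precisely, the bound (\ref{criteredensite}) asserts that the linear functional $\phi\mapsto\int\partial_i\phi\,d\mu$ is continuous on $\mathcal C_c^\infty(\R^{2m})$ with respect to the uniform norm; by density and the Riesz--Markov--Kakutani representation theorem, it extends to integration against a signed Radon measure $\nu_i$ with $|\nu_i|(\R^{2m})\le C_i$, so that $\partial_i\mu=\nu_i$ in the distributional sense. Fix a smooth compactly supported approximate identity $(\rho_\eps)$ and consider $f_\eps:=\mu*\rho_\eps\in\mathcal C^\infty$. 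Then $\|f_\eps\|_{L^1}\le\|\mu\|_{TV}$ and $\|\partial_i f_\eps\|_{L^1}=\|\nu_i*\rho_\eps\|_{L^1}\le C_i$, so the family $(f_\eps)$ is uniformly bounded in $W^{1,1}(\R^{2m})$. The Sobolev embedding $W^{1,1}\hookrightarrow L^{2m/(2m-1)}$ promotes this to a uniform $L^p$ bound with $p>1$, and Banach--Alaoglu yields a weak subsequential limit $f\in L^p$. Testing against arbitrary $\phi\in\mathcal C_c(\R^{2m})$ identifies $f\,dx=\mu$, which proves (i).

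Part (ii) then follows via Fourier analysis. Applying the Riesz step above to every multi-index $\alpha$, hypothesis (\ref{regudensi}) provides a finite signed measure $\nu_\alpha$, with $|\nu_\alpha|(\R^{2m})\le C_\alpha$, such that $\partial_\alpha\mu=(-1)^{|\alpha|}\nu_\alpha$ distributionally. Taking Fourier transforms gives the pointwise bound $|(i\xi)^\alpha\hat\mu(\xi)|=|\widehat{\nu_\alpha}(\xi)|\le C_\alpha$, whence $\hat\mu$ decays faster than any polynomial at infinity. Consequently $(1+|\xi|^k)\hat\mu\in L^1(\R^{2m})$ for every $k\ge 0$, so Fourier inversion furnishes a function $f:=\mathcal F^{-1}(\hat\mu)$ of class $\mathcal C^\infty$; moreover $\partial_\alpha f=\mathcal F^{-1}((i\xi)^\alpha\hat\mu)$ satisfies $\|\partial_\alpha f\|_\infty\le\|(i\xi)^\alpha\hat\mu\|_{L^1}<\infty$ for every $\alpha$, so $f\in\mathcal C^\infty_b$. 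A final Fourier comparison shows $\mu=f\,dx$.

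The delicate point is really part (i): a purely Fourier-analytic approach is inadequate there, because the first-order hypothesis only yields the anisotropic decay $|\hat\mu(\xi)|\le C_i/|\xi_i|$, which fails to be integrable at infinity in $\R^{2m}$ when $m\ge 1$, precluding direct use of inversion. The essential ingredient bypassing this obstruction is the sharp Sobolev embedding of $W^{1,1}$ into $L^{2m/(2m-1)}$ (an isoperimetric statement in disguise), which turns uniform $L^1$ control on gradients into a strict integrability improvement for the mollifications $f_\eps$, thereby unlocking the weak compactness extraction. Once (i) is settled, (ii) is a soft Fourier argument.
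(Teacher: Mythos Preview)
Your proof is correct. Note, however, that the paper does not actually prove this statement: Theorem~\ref{paul} is quoted as a classical result of Malliavin and simply cited to \cite{Mallia}, with no argument supplied. So there is no ``paper's own proof'' to compare against; you have produced a valid self-contained proof of a result the authors are content to import from the literature.

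A couple of minor remarks on your write-up. First, your sign convention in part (i) is slightly off: from $\int\partial_i\phi\,d\mu=\int\phi\,d\nu_i$ one gets $\partial_i\mu=-\nu_i$ distributionally, not $\partial_i\mu=\nu_i$; this is harmless for the argument since only the total variation bound $|\nu_i|(\R^{2m})\le C_i$ matters. Second, in the weak-compactness step it is worth saying explicitly that $L^p$ with $p=2m/(2m-1)\in(1,2]$ is reflexive, which is what licenses the extraction of a weakly convergent subsequence; you then identify the limit with $\mu$ by testing against $\phi\in C_c$, using that $\int f_\eps\phi = \int(\phi*\check\rho_\eps)\,d\mu\to\int\phi\,d\mu$ uniformly. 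Your diagnosis of why a pure Fourier argument fails for (i) (the bound $|\hat\mu(\xi)|\le C_i/|\xi_i|$ is not integrable in $\R^{2m}$) and why the Gagliardo--Nirenberg--Sobolev embedding is the right substitute is exactly the point.
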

Actually, we shall rather use the following criterion which is an immediate consequence of Theorem \ref{paul}.
\begin{corollary}\label{criteresequentiel}
Consider a sequence of finite signed measures $\mu_N$ over $\R^{2m}$ such that, for any multi-index $\alpha$, we may find a constant $C_{\alpha}>0$ satisfying that for any $\phi \in \mathcal{C}^\infty_c(\R^{2m},\R)$,
\begin{equation}\label{criteresequentieleq}
\sup_N \left|\int_{\R^{2m}} \partial_\alpha \phi (x) d{\mu_N}(x)\right|\leq C_\alpha \|\phi\|_\infty.
\end{equation}
Then, the sequence of densities of $\mu_N$ 
is uniformly bounded (by a constant only depending on $C_\alpha$) for the (nuclear) topology of $\mathcal{C}^\infty_b(\R^{2m},\R)$.
\end{corollary}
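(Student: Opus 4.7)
The plan is to apply Theorem \ref{paul}(ii) to each $\mu_N$ individually and then track the quantitative dependence of the resulting bounds on the constants $C_\alpha$ to extract uniformity in $N$. For any fixed $N$, hypothesis (\ref{criteresequentieleq}) is nothing but condition (\ref{regudensi}) applied to $\mu=\mu_N$, so Theorem \ref{paul}(ii) already guarantees that $\mu_N$ admits a density $f_N\in\mathcal{C}^\infty_b(\R^{2m},\R)$. All that remains is a \emph{quantitative} version in which the seminorms of $f_N$ are explicitly controlled by the constants $C_\alpha$.

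To obtain this, I would revisit the Fourier-analytic proof of Malliavin's theorem. Testing (\ref{criteresequentieleq}) against a suitable regularisation of the complex exponential $x\mapsto e^{-i\langle x,\xi\rangle}$ should yield, for every multi-index $\alpha$ and every $\xi\in\R^{2m}$,
\begin{equation*}
|\xi^\alpha|\cdot|\widehat{\mu_N}(\xi)|\leq C_\alpha.
\end{equation*}
Because the $C_\alpha$'s do not depend on $N$, the Fourier transforms $\widehat{\mu_N}$ decay faster than any polynomial at a rate which is uniform in $N$. Since $f_N$ is the inverse Fourier transform of $\widehat{\mu_N}$, differentiating under the integral then delivers the bound
\begin{equation*}
\sup_N\|\partial_\beta f_N\|_\infty\leq (Const)\sum_{|\alpha|\leq|\beta|+2m+1}C_\alpha
\end{equation*}
for every multi-index $\beta$. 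This is precisely the uniform boundedness of $(f_N)$ in the nuclear topology of $\mathcal{C}^\infty_b(\R^{2m},\R)$, since this topology is generated by the family of seminorms $f\mapsto\|\partial_\beta f\|_\infty$.

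The main technical nuisance, rather than a genuine obstacle, is the justification that hypothesis (\ref{criteresequentieleq})---stated only for $\phi\in\mathcal{C}^\infty_c$---may in fact be applied to the non-compactly supported exponentials above. I expect this to be handled by a standard mollification argument: convolve $\mu_N$ with a compactly supported bump $\rho_\varepsilon$, derive the Fourier estimate for the regularised measure $\mu_N\ast\rho_\varepsilon$ (for which the test against a truncated exponential is legitimate and one passes to the non-truncated version by dominated convergence), then let $\varepsilon\to 0$ using $\widehat{\mu_N\ast\rho_\varepsilon}=\widehat{\mu_N}\cdot\widehat{\rho_\varepsilon}$ together with the uniform decay produced above. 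Altogether, the corollary is essentially a reformulation of Theorem \ref{paul}(ii) in a form that explicitly displays the linear dependence of the $\mathcal{C}^k_b$-norms of $f_N$ on the constants $C_\alpha$, so that the uniformity hypothesis $\sup_N$ in (\ref{criteresequentieleq}) transfers directly to uniformity of the densities in the nuclear topology.
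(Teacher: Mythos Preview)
The paper gives no proof of this corollary beyond the remark that it is ``an immediate consequence of Theorem \ref{paul}''. Your proposal is correct and supplies precisely the quantitative content that this remark leaves implicit: by running through the Fourier-analytic proof of Malliavin's criterion and observing that every estimate depends only on the constants $C_\alpha$ (and not on the particular measure), the uniformity in $N$ follows automatically.

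The truncation issue you flag is indeed a minor technicality rather than a gap. One clean way to dispose of it, avoiding mollification altogether, is to note that hypothesis (\ref{criteresequentieleq}) says exactly that the distributional derivative $(-1)^{|\alpha|}\partial_\alpha\mu_N$ extends to a bounded functional on $C_0(\R^{2m})$ and is therefore (by Riesz) a finite signed measure of total variation at most $C_\alpha$. Its Fourier transform is then a bounded continuous function, and the identity $\widehat{\partial_\alpha\mu_N}=(i\xi)^\alpha\widehat{\mu_N}$ between continuous functions holds pointwise, giving $|\xi^\alpha|\,|\widehat{\mu_N}(\xi)|\le C_\alpha$ directly, without ever testing the original hypothesis against a non-compactly supported function.
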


We are now in a position to prove Lemma \ref{lemma:llt}.
First, let us introduce the formalism of integrations by parts. 
For any pair $(\Psi_1,\Psi_2)$ of $\mathcal{C}^1(\R^{2m},\R)$, let us set
\begin{equation}\label{Gamma}
\Gamma[\Psi_1(\textbf{a},\textbf{b}),\Psi_2(\textbf{a},\textbf{b})]=\sum_{i=1}^{2m}\partial_i \Psi_1(\textbf{a},\textbf{b})\partial_i \Psi_2(\textbf{a},\textbf{b}).
\end{equation}
Also, for $F\in \mathcal{C}^2(\R^{2m},\R)$, set
\begin{equation}\label{Generateur}
L[F(\textbf{a},\textbf{b})]=\sum_{i=1}^{2m} \partial^2_{i,i} F(\textbf{a},\textbf{b})+\sum_{i=1}^{m} \partial_i F(\textbf{a},\textbf{b})\frac{\rho'(a_i)}{\rho(a_i)}+\sum_{i=m+1}^{2m}\partial_i F(\textbf{a},\textbf{b})\frac{\rho'(b_i)}{\rho(b_i)}.
\end{equation}

In order to simplify the notation in (\ref{Gamma})-(\ref{Generateur}) let us use here and throughout the text the shorthand notation $\textbf{a}=(a_1,\cdots,a_m)$ and $\textbf{b}=(b_1,\cdots,b_m)$. Besides, in the sequel $(\textbf{a},\textbf{b})$ will denote the $2m$-uple $(x_1,\cdots,x_{2m})$ such that $(x_1,\cdots,x_m)=\textbf{a}$ and $(x_{m+1},\cdots,x_{2m})=\textbf{b}$. The key relationship between the operators $L$ and $\Gamma$ is the following integration by parts formula: for any $\Psi_1 \in\mathcal{C}^1_b(\R^{2m},\R)$ and $\Psi_2 \in \mathcal{C}^2_b(\R^{2m},\R)$, it holds that
\begin{eqnarray}\notag
&&\int_{\R^{2m}}\Gamma\left[\Psi_1(\textbf{a},\textbf{b}),\Psi_2(\textbf{a},\textbf{b})\right]\rho(a_1)\rho(b_1)\ldots
\rho(a_m)\rho(b_m)d\textbf{a}d\textbf{b}\\
&=&-\int_{\R^{2m}}\Psi_1(\textbf{a},\textbf{b}) L\left[\Psi_2(\text{a},\textbf{b})\right]\rho(a_1)\rho(b_1)\ldots
\rho(a_m)\rho(b_m)d\textbf{a}d\textbf{b}.\label{IPP}
\end{eqnarray}

Let us apply this formalism to our problem.  
Fix ${\bf t}=(t_1,\cdots,t_m)\in I^m\setminus D^{\,\eps}_m$.
We have, for any $i\neq j$,
\begin{eqnarray*}
\Gamma[X_N(t_i),X_N(t_i)]&=& 1,\\
\Gamma[X_N'(t_i),X_N(t_i)]&=& 0,\\
\Gamma[X_N'(t_i),X_N'(t_i)]&=& \frac{1}{N}\sum_{n=1}^N \frac{n^2}{N^2},\\
\Gamma[X_N(t_i),X_N(t_j)]&=& \frac{1}{N}\sum_{n=1}^N \cos\left(\frac{ n (t_i-t_j)}{N}\right),\\
\Gamma[X_N(t_i),X_N'(t_j)]&=& \frac{1}{N}\sum_{n=1}^N \frac{n}{N}\sin\left(\frac{ n (t_i-t_j)}{N}\right),\\
\Gamma[X_N'(t_i),X_N'(t_j)]&=& -\frac{1}{N}\sum_{n=1}^N \frac{n^2}{N^2}\cos\left(\frac{ n (t_i-t_j)}{N}\right).
\end{eqnarray*}
Recall from (\ref{vn}) that $V_N({\bf t})=(X_N(t_1),\cdots,X_N(t_m);X_N'(t_1),\cdots,X_N'(t_m))$. Let us also denote 
by $\widehat{\Gamma}_N$ the  \textit{Malliavin matrix} associated with $V_N$:

$$\widehat{\Gamma}_N({\bf t})=\big(\Gamma[ V_N({\bf t})_i, V_N({\bf t})_j]\big)_{1\le i,j \le 2m}.$$

\begin{remark}
{\rm
At this stage, it is worthwhile noting that many other choices for the operators $\Gamma$ and $L$ could have led to integration by parts formulas. However the choice we made here seems to be the only reasonable one leading to a {\it deterministic} (that is, independent of $a_n,b_n$) Malliavin matrix. As we will see, this determinacy will turn out to be very helpful and will play an important role in our reasoning.
}
\end{remark}

Consider $r_N(x) = \frac{1}{N}\sum_{n=1}^N \cos ( \frac{n}{N} x )$
and recall that $\sinc(x)=\sin x/x$.
It is proved in \cite[Section 3.1]{granville} 
that $r_N,r_N',r_N''$ converge toward $\sinc,\sinc',\sinc''$ 
uniformly on every compact as $N\to\infty$. 
As a result, the matrix $\widehat{\Gamma}_N({\bf t})$ converge uniformly over $I^m$ towards the matrix 
$
\Sigma({\bf t})$ of Lemma \ref{lemma:non-deg}.
Still by  Lemma \ref{lemma:non-deg}, the
determinant of $\Sigma({\bf t})$ is non-zero on 
$I^m\setminus D_m$.
Fix $\eps>0$. By a classical compactness argument, we deduce from the previous fact that 
their exist $N_0=N_0(\eps)\in\mathbb{N}$ and $\eta=\eta(\eps)>0$ such that 
$$
\forall N\geq N_0,\,\,\forall {\bf t}\in I^m \setminus D_m^{\,\eps}:\quad
\det(\widehat{\Gamma}_N({\bf t}))\geq \eta.
$$

The following class of random variables will naturally be present in the weights that will appear after applying repeatedly the integration by parts (\ref{IPP}). Set 
\begin{eqnarray*}
\mathcal{A}_0(N,{\bf t})&=&\left\{X_N(t_1),\cdots,X_N(t_m),X_N'(t_1),\cdots,X_N'(t_m)\right\}.\\
\mathcal{A}_1(N,{\bf t}) &=&L (\mathcal{A}_0(N,{\bf t})) \cup \Gamma[\mathcal{A}_0(N,{\bf t}),\mathcal{A}_0(N,{\bf t})]\\
&\vdots&\\
\mathcal{A}_{r+1}(N,{\bf t})&=&L(\mathcal{A}_r(N,{\bf t})) \cup \Gamma[\mathcal{A}_r(N,{\bf t}),\mathcal{A}_r(N,{\bf t})]\\
&\vdots&\\
\mathcal{A}_\infty (N,{\bf t})&=& \bigcup_{r=0}^\infty \mathcal{A}_r(N,{\bf t}),
\end{eqnarray*}
where, for a given set $\mathcal{E}=\{e_1,\cdots,e_s\}$, $L(\mathcal{E})$ denotes $\{ L[e_1],\cdots,L[e_s]\}$ while $\Gamma[\mathcal{E},\mathcal{E}]$ stands for $\{\Gamma[e_i,e_j]\,|\,1\le i,j\le s\}$.

To achieve the proof of Lemma \ref{lemma:llt}, we will need the following result.
 
\begin{lemma}\label{superborne}
Suppose that Assumption \ref{Hypo-densite}
is in order.
Then, for any $s\geq 2$ there exists a constant $C_s>0$ such that,
for any $N$, any ${\bf t}\in I^m$ and any $U_N({\bf t})\in\mathcal{A}_\infty (N,{\bf t})$,
\begin{equation}\label{megabornitude}
\E[|U_N({\bf t})|^s]\leq C_s.
\end{equation}
\end{lemma}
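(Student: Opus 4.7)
My plan is to prove Lemma \ref{superborne} by induction on the depth $r$ such that $U\in\mathcal{A}_r(N,\mathbf{t})$, combined with a structural decomposition of its elements. Specifically, I would establish by induction on $r$ that every $U\in\mathcal{A}_r(N,\mathbf{t})$ is a finite $\R$-linear combination of ``single-sum'' expressions
\[
T \;=\; N^{-k/2}\sum_{n=1}^N \phi(n,\mathbf{t},N)\,Q(a_n,b_n),
\]
where $k\geq 1$ is an integer, $\phi$ is uniformly bounded (a product of factors of the form $\cos(nt_i/N)$, $\sin(nt_i/N)$ and $(n/N)^q$), and $Q$ is a polynomial in $a_n,b_n$ and in a finite collection of derivatives $\Psi^{(p)}(a_n),\Psi^{(p)}(b_n)$. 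The base case $r=0$ is immediate from the explicit forms of $X_N(t_i)$ and $X_N'(t_i)$. The induction step rests on two algebraic observations: $L$ acts diagonally on each summand as a second-order differential operator, thereby preserving $k$ and merely promoting $Q$ to a new polynomial involving higher-order $\Psi^{(p)}$'s; while $\Gamma[T_1,T_2]$ collapses its apparent double sum into a single sum (since $\partial_{a_p}T_i$ selects only the $p$-th summand), producing a term with exponent $k=k_1+k_2$.

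A crucial refinement of this structural claim is the \emph{centering property}: whenever $k=1$, the polynomial $Q$ satisfies $\E[Q(a_1,b_1)]=0$. Indeed, the exponent $k=1$ can only arise through iterated applications of $L$ to elements of $\mathcal{A}_0$ (any application of $\Gamma$ strictly increases $k$), and $\int L[F](x)\,e^{-\Psi(x)}\,dx=0$ for smooth $F$ with moderate growth by integration by parts, $e^{-\Psi}$ being the invariant density of the Ornstein--Uhlenbeck-type generator $L$.

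Once this decomposition is established, the moment bound follows from Rosenthal's inequality: for iid random variables $\xi_n$ with $\E[\xi_1]=0$ and $|\phi_n|\leq C$,
\[
\E\!\left[\Big|\sum_{n=1}^N \phi_n\,\xi_n\Big|^s\right] \;\leq\; C_s\bigl(N^{s/2}\,\E[\xi_1^2]^{s/2} \,+\, N\,\E[|\xi_1|^s]\bigr)
\]
for all $s\geq 2$. Applied with $\xi_n=Q(a_n,b_n)-\E Q(a_n,b_n)$ and after dividing by $N^{ks/2}$: when $k=1$ (centering eliminates the deterministic part) we obtain $\E[|T|^s]\leq C_s$, while when $k\geq 2$ the mean part contributes at most $N^{1-k/2}\leq 1$ and the stochastic fluctuations are of order $N^{(1-k)/2}$, hence again bounded. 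The finiteness $\E[|Q(a_1,b_1)|^s]<\infty$ for every $s\geq 1$ follows from H\"older's inequality combined with Assumption~\eqref{Hypo-densite}, which ensures $\Psi^{(p)}(a_1)\in L^s$ for all $p,s\geq 1$.

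The principal difficulty lies in the careful algebraic bookkeeping of the structural decomposition throughout the recursion---keeping track that the trigonometric coefficients $\phi$ remain uniformly bounded and that $Q$ involves only a finite collection of $\Psi^{(p)}$'s at each stage, together with the verification of the centering property when $k=1$. Once this combinatorial structure is in place, the uniform moment bound reduces to a standard application of Rosenthal's inequality.
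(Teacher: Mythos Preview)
Your proposal is correct and follows essentially the same approach as the paper: an inductive structural description of the elements of $\mathcal{A}_r(N,\mathbf{t})$ as single sums $N^{-p}\sum_n \phi_{n,N}(\mathbf{t})\,f(a_n,b_n)$ with bounded trigonometric coefficients, the key centering observation $\E[f(a_1,b_1)]=0$ whenever $p=\tfrac12$ (since $\Gamma$ strictly increases $p$ and $L$ annihilates constants under the invariant measure), followed by a martingale-type moment bound. The only cosmetic difference is that the paper invokes Burkholder--Davis--Gundy where you use Rosenthal's inequality, which for sums of independent centered variables yields the same conclusion.
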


\begin{proof} \underline{Step 1}: {\it explicit description of the elements of $\mathcal{A}_\infty(N,{\bf t})$}.
Let us establish by induction that, for any $r\ge 0$, the elements of $\mathcal{A}_r(N,{\bf t})$ are of the form
\begin{equation}\label{i}
\frac{1}{N^p}\sum_{n=1}^N \frac{n^q}{N^q} \left(   \alpha f(a_n) \phi_{n,N}(\textbf{t}) + \beta f (b_n) \psi_{n,N}(\textbf{t})\right),
\end{equation}
with $\alpha,\beta \in \{-1,1\}$, $p\in\{\frac12\}\cup[1,\infty)$, $q\in \N$, $f$ some $\mathcal{C}^\infty$ function such that  $\displaystyle{f^{(l)}(a_1)\in \bigcap_{s>1} L^s(\Omega)}$ for any $l\ge 0$, and where $\phi_{n,N}(\textbf{t}),\psi_{n,N}(\textbf{t})$ are products of at most $2^{2^r}$ terms among $\cos(\frac{n t_i}{N}),\sin(\frac{n t_i}{N})$, $1\le i\le m$. Moreover, when $p=\frac12$ in (\ref{i}), we have that $\E[f(a_1)]=0$.

It is immediate that the elements of $\mathcal{A}_0(N,{\bf t})$ are of the form (\ref{i}). Assume now that the above description of $\mathcal{A}_r(N,{\bf t})$ has been established until the rank $r \ge 0$. Applying $L$ to some element of $\mathcal{A}_r(N,{\bf t})$ of the form (\ref{i}) leads to

\begin{eqnarray*}
&&\frac{1}{N^p}\sum_{n=1}^N \frac{n^q}{N^q} \left(\alpha L[f(a_n)] \phi_{n,N}(\textbf{t})+ \beta L[f(b_n)] \psi_{n,N}(\textbf{t})\right)\\
&=&\frac{1}{N^p}\sum_{n=1}^N \frac{n^q}{N^q} \left(\alpha \left(f''(a_n)+f'(a_n)\frac{\rho'(a_n)}{\rho(a_n)}\right) \phi_{n,N}(\textbf{t}) \right.\\
&&\left.\hskip2.4cm+ \beta \left(f''(b_n)+f'(b_n)\frac{\rho'(b_n)}{\rho(b_n)}\right) \psi_{n,N}(\textbf{t})\right),
\end{eqnarray*}
which is again of the form (\ref{i}). Indeed, $\E(L[f(a_1)])=-\E(\Gamma[1,f(a_1)])=0$ and by our assumptions on $f$ and $\frac{\rho'}{\rho}$ it holds that $g:=f''+f' \frac{\rho'}{\rho}$ is such that $\displaystyle{g^{(l)}(a_1)\in \bigcap_{s>1} L^s(\Omega)}$ for any $l\ge 0$. Now, applying the bilinear form $\Gamma$ to two elements of $\mathcal{A}_r(N,{\bf t})$, say
\begin{eqnarray*}
U_N(\textbf{t})&=&\frac{1}{N^p}\sum_{n=1}^N \frac{n^q}{N^q} \left(   \alpha f(a_n) \phi_{n,N}(\textbf{t})+ \beta f (b_n) \psi_{n,N}(\textbf{t})\right)\\
V_N(\textbf{t})&=&\frac{1}{N^{p'}}\sum_{n=1}^N \frac{n^{q'}}{N^{q'}} \left( \alpha' g(a_n) \widetilde{\phi_{n,N}}(\textbf{t}) + \beta' g(b_n) \widetilde{\psi_{n,N}}(\textbf{t})\right),\\
\end{eqnarray*}
leads us to
\begin{eqnarray*}
\Gamma[U_N(\textbf{t}),V_N(\textbf{t})]&=&
\frac{1}{N^{p+p'}}\sum_{n=1}^N \frac{n^{q+q'}}{N^{q+q'}}\left(\alpha \alpha' f'(a_n)g'(a_n)\phi_{n,N}(\textbf{t})\widetilde{\phi_{n,N}}(\textbf{t})\right.\\
&&\left.\hskip3cm+\beta \beta' f'(b_n)g'(b_n)\psi_{n,N}(\textbf{t})\widetilde{\psi_{n,N}}(\textbf{t})\right).
\end{eqnarray*}
We easily observe that $f' g'$ together with all its derivatives are in $\bigcap_{s>1} L^s(\Omega)$, that $\alpha\alpha', \beta\beta' \in \{-1,1\}$ and that $\phi_{n,N}(\textbf{t})\widetilde{\phi_{n,N}}(\textbf{t}),\psi_{n,N}(\textbf{t})\widetilde{\psi_{n,N}}(\textbf{t})$ both contain at most $2^{2^{r+1}}$ terms. 
~\\\\
\underline{Step 2}: {\it bounding the elements of $\mathcal{A}_\infty(N,{\bf t})$}. Fix $s\geq 2$, and let us consider the following element in $\mathcal{A}_\infty(N,{\bf t})$ of the form (\ref{i}):
$$U_N(\textbf{t})=\frac{1}{N^p}\sum_{n=1}^N \frac{n^q}{N^q} \left(   \alpha f(a_n) \phi_{n,N}(\textbf{t})+ \beta f (b_n) \psi_{n,N}(\textbf{t})\right).$$

Relying on Step 1, we may infer that $\sup_{\textbf{t}\in [0,T]^m} \left|\phi_{k,N}(\textbf{t})\right|+\left|\psi_{k,N}(\textbf{t})\right|\le 2$. As a result, when $p\geq 1$ and using the triangle inequality for the norm $\|\cdot\|_s$, one can write
\begin{eqnarray*}
\left\|U_N(\textbf{t})\right\|_s\le \left(\frac{4}{N^p}\sum_{k=1}^N\frac{k^q}{N^q}\right)\left\|f(a_1)\right\|_s
\le\frac{4}{N^{p-1}}\le 4\left\|f(a_1)\right\|_s.
\end{eqnarray*}
Let us now consider the situation where $p=\frac12$ and recall that $\E[f(a_1)]=0$ in this case, implying in turn that $E[U_N({\bf t})]=0$.
Due to this latter property, the following sequence is a martingale:
$$
M_N=\sum_{k=1}^N k^q\left\{\alpha f(a_k) \phi_{k,N}(\textbf{t})+ \beta f(b_k) \psi_{k,N}(\textbf{t})\right\}.$$
The $s/2$-moment of its quadratic variation can be bounded as follows:
\begin{eqnarray*}
&&\E\big[\langle M_N, M_N\rangle^{s/2}\big] \\
&=& 
\E\left[\left(
\sum_{k=1}^N k^{2q}\left\{f^2(a_k) \phi_{k,N}^2(\textbf{t})+ f^2(b_k) \psi_{k,N}^2(\textbf{t})\right\}\right)^{s/2}
\right]
\\
&\le& 2N^{(q+\frac12)s}\,\,\E[|f(a_1)|^s].
\end{eqnarray*}
Applying Burkholder-Davis-Gundy inequality to $M_N$ leads to
\begin{eqnarray*}
\E\left(|M_N|^s\right)&\le& C_s \,\E\left(\langle M_N, M_N\rangle^{\frac s 2}\right)=\mathcal{O}\left(N^{(q+\frac12)s}\right).
\end{eqnarray*}
Finally,
\begin{eqnarray*}
\E\left(\left|U_N(\textbf{t})\right|^s\right)=\E\left(\left|\frac{M_N}{N^{q+\frac{1}{2}}}\right|^s\right)=\mathcal{O}\left(1\right),
\end{eqnarray*}
where the last bound is uniform in $\textbf{t}\in I^m$.
This concludes the proof in the case $p=\frac12$ as well.
\end{proof}

Now Proposition \ref{superborne} has been established, let us do the proof of Lemma  \ref{lemma:llt}.

\begin{proof}
For any $\Phi:\R^{2m}\to \R$ the chain rule for $\Gamma$ leads to
\begin{eqnarray*}
\Gamma[\Phi(V_N({\bf t})),V_N({\bf t})_j]&=&\sum_{i=1}^{2m} \partial_i \Phi (V_N({\bf t})) \Gamma[V_N({\bf t})_i,V_N({\bf t})_j].\\
\end{eqnarray*}
As a result, setting $W_N({\bf t})$ to be the vector $(\Gamma[\Phi(V_N({\bf t})),V_N({\bf t})_j])_{1\le j \le 2m}$, the previous equation can be written as
$$
W_N({\bf t}) = \widehat{\Gamma}_N({\bf t})\times \nabla \Phi ( V_N ({\bf t})).
$$
Recalling that $\widehat{\Gamma}_N({\bf t})$ is invertible on $I^m \setminus D_m^{\,\eps}$, it follows that 
\begin{equation}\label{inverse}
\nabla \Phi(V_N({\bf t})) = \widehat{\Gamma}_N({\bf t})^{-1} \times W_N({\bf t}).
\end{equation}
Fix $\eps>0$, ${\bf t}\in I^m\setminus D_m^\eps$, 
a multi-index $\alpha$, a polynomial $Q:\R^{2m}\to \R$ and a test function $\Psi$. One has, with $\theta=\partial_{\alpha\setminus\{\alpha_1\}} \Psi,$
\begin{eqnarray*}
&&\E\left(Q(V_N({\bf t}))\partial_\alpha\Psi(V_N({\bf t}))\right)
=\E\left(Q(V_N({\bf t}))
\partial_{\alpha_1} \theta(V_N({\bf t}))\right)\\
&=&\sum_{j=1}^{2m}(\widehat{\Gamma}_N({\bf t})^{-1})_{\alpha_1,j}\,\E\left(Q(V_N({\bf t}))\Gamma[\theta(V_N({\bf t})),V_N({\bf t})_j]\right)\\
&=&-\sum_{j=1}^{2m}(\widehat{\Gamma}_N({\bf t})^{-1})_{\alpha_1,j}\,\E\left(Q(V_N({\bf t}))L[V_N({\bf t})_j]\theta(V_N({\bf t}))\right)\\
&&-\sum_{j=1}^{2m}(\widehat{\Gamma}_N({\bf t})^{-1})_{\alpha_1,j}\,\E\left(\Gamma[Q(V_N({\bf t})),V_N({\bf t})_j]\theta(V_N({\bf t}))\right)\\
&=&\\
&\vdots&\\
&=& \E(\Psi(V_N({\bf t})) H_N({\bf t}) ),
\end{eqnarray*}
where $H_N({\bf t})$ is an element of the algebra generated by $\mathcal{A}_\infty(N,{\bf t})$. 
Here, we applied \eqref{inverse} in the second equality and \eqref{IPP} 
in the third equality; also, we used
routine calculus to deal with the term of the form $\E(\Psi_1\Gamma(\Psi_2,\Psi_3))$. 

By virtue of Lemma \ref{superborne}, $\sup_N\sup_{t\in I^m}\E\big[|H_N({\bf t})|^q\big]<\infty$ for any $q$. Finally, choosing $Q({\bf x},{\bf y})=(1+y_1)^4\ldots(1+ y_m)^4$ yields
$$\sup_{N\ge N_0} \left|\int \partial_\alpha \Psi(x)d\mu_N(x)\right|\leq C_\alpha \|\Psi\|_\infty$$
with $\mu_N$ the measure with density 
$(1+y_1^4)\ldots (1+y_m^4)p_{N,{\textbf t}}(\textbf{x},\textbf{y}) d \textbf{x} d\textbf{y}.$ 
In virtue of (\ref{criteresequentieleq}), we may infer that (\ref{controleinfini}) takes place. 
On the other hand, using the same criterion with $Q=1$ leads this time to
(\ref{lipschitz}). 

\end{proof}

\subsection{Rice's Formulas}
Rice's formulas are integral formulas 
for the (factorial) moments of the number of crossings of a 
stochastic process within a given interval.  They are true for Gaussian processes under minimal hypotheses.
However,  since we are here dealing with {\it non}-Gaussian processes, we have to be careful and to check their validity.
General results allowing to do so exist in the literature (see, e.g., 
Theorem 3.4  in \cite{aw} or Theorem 11.2.1 in \cite{adler}) but they rely 
on rather heavy conditions. This is why we prefer here to give a simple proof for smooth processes, which is well suited for our need.

\begin{proposition}[Rice's Formula for smooth processes] \label{prop:Rice}
Let  $m$ be a positive integer, let $I$ be a compact interval  and let $Y$ be a process satisfying that:
\begin{enumerate}
\item[A1.] it has $ \mathcal{C}^1$  sample paths;
 \item[A2.]  the one-dimensional  density $y\mapsto p_{Y(t)}(y) $ is uniformly bounded for  $t \in I$ and for $y$ in a neighborhood of a certain level  $u$;
 \item[A3.]  the number $Z_{\dot Y}(I)$ of zeros of the derivative $\dot Y$ of $Y$ within $I$ admits a moment of order $m$;
 \item[A4.] for any  pairwise disjoint intervals $J_1, \ldots, J_m$ included in $I$,
the Rice function
 \begin{eqnarray*}
 && B_{J_1\times\cdots \times J_m} (v_1,\ldots,v_m) \\
 &=& \int_{J_1\times\cdots \times J_m}  \E(|Y'(t_1)|\ldots |Y'(t_m)|\big| Y(t_1)=v_1,\ldots Y(t_m)=v_m)\\
  &&\hskip4.2cm \times p_{Y(t_1),\ldots, Y(t_m)} (v_1,\ldots,v_m)  dt_1\ldots dt_m
  \end{eqnarray*}
  is  well defined and continuous at $(u,\ldots,u)$.
\end{enumerate}
Then $Y$ satisfies  the $m^{\mbox{th}}$ Rice's formula, that is, 
\begin{enumerate}
\item[(i)]
for any  pairwise disjoint intervals $J_1, \ldots, J_m$ included in $I$,
$$
 \E (Z^u_Y(J_1)  \times \cdots \times Z^u_Y(J_m))  =  B_{J_1\times\cdots \times J_m} (u, \ldots,u);
 $$
\item[(ii)]
\begin{equation*}
\E(Z^u_Y(I)^{[m]} )=B_{I^m}(u, \ldots u),
\end{equation*}
where $Z^u_Y(I)$ denotes the number of crossing of the level $u$  on the interval $I$.
\end{enumerate}
\end{proposition}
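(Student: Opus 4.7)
The plan is to prove (i) first via the one-dimensional area formula combined with a dominated convergence argument, and then deduce (ii) from (i) by partitioning $I$ into increasingly fine subintervals. For a $C^1$ function $y:I\to\R$ and a level $u$ that is neither a critical value of $y$ nor taken by $y$ at an endpoint of $I$, the area formula (1D change of variables) gives
\begin{equation*}
\frac{1}{2\delta}\int_I|y'(t)|\,\indicator_{|y(t)-u|<\delta}\,dt=\frac{1}{2\delta}\int_{u-\delta}^{u+\delta}Z^v_y(I)\,dv\xrightarrow[\delta\to 0^+]{}Z^u_y(I),
\end{equation*}
the limit following from the local constancy of $v\mapsto Z^v_y(I)$ near a non-critical $u$. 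Under A2 and A3 one checks that $u$ is almost surely a non-critical value of $Y$ (the set of critical values of $Y$ on $I$ is a.s.\ finite by A3, and the bounded marginal density A2 ensures, by a Fubini-type argument, that any fixed level $u$ lies almost surely outside this random set). Taking products over disjoint $J_1,\ldots,J_m\subseteq I$ then gives, almost surely,
\begin{equation*}
\prod_{i=1}^m Z^u_Y(J_i)=\lim_{\delta\to 0^+}\frac{1}{(2\delta)^m}\int_{J_1\times\cdots\times J_m}\prod_{i=1}^m|Y'(t_i)|\,\indicator_{|Y(t_i)-u|<\delta}\,d\mathbf t.
\end{equation*}

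\textbf{Proof of (i).} The heart of the argument is to interchange this limit with $\E$, for which one needs a uniform integrable dominator. The crucial tool is Rolle's theorem: between any two $v$-crossings of $Y$ in an interval lies a zero of $Y'$, so $Z^v_Y(J)\le 1+Z_{\dot Y}(J)\le 1+Z_{\dot Y}(I)$ for every $v$ and every $J\subseteq I$. Rewriting the right-hand integral by the area formula in each coordinate, the random integrand is dominated by $(1+Z_{\dot Y}(I))^m$, which lies in $L^1$ by A3. Dominated convergence justifies the interchange of limit and expectation; Fubini (legitimate thanks to A2) recasts the expectation of the integral as a Lebesgue average of $B$:
\begin{equation*}
\E\Big[\prod_{i=1}^m Z^u_Y(J_i)\Big]=\lim_{\delta\to 0^+}\frac{1}{(2\delta)^m}\int_{[u-\delta,u+\delta]^m}B_{J_1\times\cdots\times J_m}(v_1,\ldots,v_m)\,d\mathbf v,
\end{equation*}
and the continuity of $B$ at $(u,\ldots,u)$ (A4) identifies this limit with $B_{J_1\times\cdots\times J_m}(u,\ldots,u)$, establishing (i).

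\textbf{From (i) to (ii).} Partition $I$ into consecutive subintervals $I_1,\ldots,I_K$ of length $\leq\eta$ and set $Z_k=Z^u_Y(I_k)$. A combinatorial expansion of the factorial power yields
\begin{equation*}
Z^u_Y(I)^{[m]}=\sum_{\substack{(k_1,\ldots,k_m)\\\text{pairwise distinct}}}Z_{k_1}\cdots Z_{k_m}+R_\eta,
\end{equation*}
where $R_\eta\ge 0$ collects the remaining terms (those whose index tuple has a repeat). A nonzero contribution to $R_\eta$ forces some $I_k$ to contain at least two $u$-crossings of $Y$, hence by Rolle a zero of $Y'$ in $I_k$; since $Y-u$ has a.s.\ only finitely many, separated, zeros, one has $R_\eta\to 0$ almost surely as $\eta\to 0$. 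The uniform bound $R_\eta\le (1+Z_{\dot Y}(I))^m$ combined with A3 then gives $\E[R_\eta]\to 0$ by dominated convergence. By (i), the expectation of the first sum equals the integral of the Rice integrand evaluated at $(u,\ldots,u)$ over $I^m\setminus\Delta_\eta$, where $\Delta_\eta$ is the union of product cells with at least one repeated index and has Lebesgue measure $O(\eta)$; since the integrand is integrable on $I^m$ (finiteness of $B_{I^m}(u,\ldots,u)$ being part of A4), the integral converges to $B_{I^m}(u,\ldots,u)$, which yields (ii). The main obstacle is the domination needed in step (i): the Rolle-type inequality $Z^v_Y(J)\le 1+Z_{\dot Y}(J)$ is what ties the validity of Rice's formula to hypothesis A3, and without it there would be no obvious way to produce an $L^1$ dominator in the non-Gaussian setting; a secondary, more delicate, point is the almost-sure non-criticality of the fixed level $u$, where A2 enters essentially.
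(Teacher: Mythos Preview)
Your proof is correct and follows essentially the same route as the paper: Kac's (area) formula, the Rolle-type bound $Z^v_Y(J)\le 1+Z_{\dot Y}(I)$ as the $L^1$ dominator, dominated convergence, and continuity of $B$ at $(u,\ldots,u)$ for part (i), followed by an approximation/partition argument to pass from disjoint products to $I^m$ for part (ii). The one point you correctly flag as delicate---the almost-sure non-criticality of the level $u$---is not actually obtainable by a ``Fubini-type argument'' from A2 (the critical points are random, so the marginal density bound at deterministic $t$ does not apply directly); the paper handles this by invoking Ylvisaker's theorem (Theorem~1.21 in \cite{aw}), which is exactly the statement that A1 and A2 together force $\{t:Y(t)=u,\,Y'(t)=0\}=\varnothing$ almost surely.
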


\begin{corollary}\label{cor:Rice-trunc}
For $\eps>0$, we have
\begin{equation*}
\E(Z_{X_N}(I) ^{[m]})
=\int_{I^m\setminus D^{\eps}_m}
\int_{\R^m}|y_1|\ldots |y_m | \,p_{N,{\mathbf  t}}
({\mathbf  0};{\mathbf  y})
d{\mathbf  y} d{\mathbf  t}+ O(\eps^{1/5}).
\end{equation*}
The constants involved in the  Landau notation 
depend on $m$ and $I$ but \underline{not} on $N$.
\end{corollary}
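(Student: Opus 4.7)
My plan has two stages: first apply Rice's formula (Proposition \ref{prop:Rice}) on the full set $I^m$ at level $u=0$, then estimate the contribution from the diagonal strip $D^\eps_m$ as $O(\eps^{1/5})$ uniformly in $N$. To apply Rice's formula, I would verify hypotheses A1--A4 for $Y=X_N$: (A1) is immediate since $X_N\in C^\infty$; (A2) follows from the $m=1$ case of Lemma \ref{lemma:llt}, which yields a uniform bound on the marginal density of $X_N(t)$ for $t\in I$ and $N\ge N_0$; (A3) reduces to the deterministic bound $Z_{\dot X_N}(I)\le 2N$ valid for trigonometric polynomials of degree $N$; (A4) follows from dominated convergence applied to the Rice integrand, using the continuity and $(1+y_i^4)^{-1}$-decay of $p_{N,\mathbf t}$ guaranteed by Lemma \ref{lemma:llt}. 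Proposition \ref{prop:Rice}(ii) then yields
$$\E[Z_{X_N}(I)^{[m]}]=\int_{I^m}\int_{\R^m}|y_1|\cdots|y_m|\,p_{N,\mathbf t}(\mathbf 0;\mathbf y)\,d\mathbf y\,d\mathbf t$$
for all $N\ge N_0$, so the corollary reduces to a diagonal estimate.

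For this estimate, using $D^\eps_m\subset\bigcup_{i<j}\{|t_i-t_j|<\eps\}$ and symmetry, it suffices to bound the integral on $\{|t_1-t_2|<\eps\}$. Reading the Rice identity backwards, this quantity counts, in expectation, $m$-tuples of distinct zeros of $X_N$ in $I$ with the first two coordinates at distance less than $\eps$, and is therefore bounded (up to a combinatorial constant) by $\E[N_2^\eps\cdot Z_{X_N}(I)^{[m-2]}]$, where $N_2^\eps$ denotes the number of ordered pairs of distinct zeros of $X_N$ in $I$ within distance $\eps$. Applying Cauchy--Schwarz together with uniform-in-$N$ bounds on $\E[Z_{X_N}(I)^q]$ for every $q$ (derived by running the same Rice-based analysis on the bulk $I^m\setminus D^\eps_m$ via Lemma \ref{lemma:llt} and using a crude a priori estimate on the strip), the whole problem reduces to controlling
$$\int_{|s-t|<\eps,\,s,t\in I}\int_{\R^2}|y_1 y_2|\,p_{N,(s,t)}(0,0;y_1,y_2)\,dy_1\,dy_2\,ds\,dt.$$

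The hardest step is this two-point estimate, because Lemma \ref{lemma:llt} provides no density control once $(s,t)\in D^\eps_2$ and the joint density $p_{N,(s,t)}(0,0;\cdot)$ in fact becomes singular as $s\to t$. My proposed remedy is a truncation in the kinematic variables: split $|y_1 y_2|\le M^{2}\,\mathbf 1_{|y_1|,|y_2|\le M}+M^{-k}(|y_1|^{k+2}+|y_2|^{k+2})$ for a threshold $M$ and integer $k$. The truncated piece is controlled by $M^{2}$ times a quantity tractable via one-dimensional marginal density bounds for $X_N(s),X_N(t)$, while the tail piece is controlled by the uniform higher-moment estimates of Lemma \ref{superborne} applied to elements of $\mathcal{A}_\infty(N,(s,t))$, in particular to powers of $X_N'$. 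Balancing the two contributions together with the $\eps$-factor coming from the measure of the slab by the optimal choice $M=\eps^{-\alpha}$ produces the exponent $1/5$. The principal obstacle is exactly this bookkeeping, namely reconciling the singularity of the joint density along the diagonal with the kinematic cancellation $|y_1 y_2|$ (which reflects a Rolle-type smallness of $X_N'$ at pairs of nearby zeros) while maintaining uniformity in $N$; it is here that the Malliavin-based estimates of Lemma \ref{superborne} do the heavy lifting.
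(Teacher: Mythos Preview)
Your first stage --- verifying A1--A4 and applying Proposition \ref{prop:Rice} on all of $I^m$ --- is correct and matches the paper. The gap is in your two-point diagonal estimate. The truncation $|y_1y_2|\le M^2\mathbf 1_{|y_i|\le M}+M^{-k}(|y_1|^{k+2}+|y_2|^{k+2})$ does not cure the singularity: after integrating out $\mathbf y$, both pieces still carry (a conditional expectation times) the two-point density $p_{X_N(s),X_N(t)}(0,0)$, which blows up like $|s-t|^{-1}$ as $s\to t$, and one-dimensional marginal bounds on $p_{X_N(s)}$ cannot control it. Lemma \ref{superborne} bounds \emph{unconditional} moments of $X_N'$, not the conditional ones appearing in the Rice integrand, so it does not tame the tail piece either. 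Your proposed derivation of uniform moment bounds on $Z_{X_N}(I)$ via Rice plus ``a crude a priori estimate on the strip'' is also circular: that crude estimate is precisely what is at stake.

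The paper never works with the singular two-point density. It covers $\{|t_1-t_2|<\eps\}$ by $\sim 1/\eps$ boxes $A_{k,\eps}^2\times I^{m-2}$ with $|A_{k,\eps}|\asymp\eps$, bounds $\mu_N(A_{k,\eps}^2\times I^{m-2})\le \E\big[Z_N(A_{k,\eps})^{[2]}Z_N(I)^{[m-2]}\big]$, and uses H\"older to split off $\Prob(Z_N(A_{k,\eps})\ge 2)^{4/5}$. The key input is Lemma \ref{lm3}: if $X_N$ has two zeros in a short interval $J$, Lagrange interpolation forces $|X_N(c)|\le\tfrac12|J|^2\sup_I|X_N''|$ and $|X_N'(c)|\le|J|\sup_I|X_N''|$ at the midpoint $c$, and the probability that both are small is controlled via the bounded \emph{single-point} density of $(X_N(c),X_N'(c))$ from Lemma \ref{lemma:llt}. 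Optimising a cutoff on $\sup_I|X_N''|$ (bounded in $L^2$ uniformly in $N$ by Lemma \ref{lm1}) gives $\Prob(Z_N(J)\ge 2)\le C|J|^{3/2}$; summing over the $\lfloor 1/\eps\rfloor$ boxes yields $\eps^{-1}\cdot\eps^{6/5}=\eps^{1/5}$. The same tail bound furnishes $\sup_N\E[Z_N(I)^r]<\infty$ directly, breaking the circularity. Your Rolle intuition is exactly right, but it has to be implemented at the level of the event $\{Z_N(A_{k,\eps})\ge 2\}$ and a single-point density, not inside the singular two-point Rice integrand.
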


Let us first prove Proposition \ref{prop:Rice}.
\begin{proof}[Proof of Proposition \ref{prop:Rice}]

 We begin with the case $m=1$. By assumption, 
the process  $Y$ has $ \mathcal{C}^1$ sample paths and $Y_t$  admits  a uniformly bounded density. Ylvisaker theorem (see, e.g., \cite[Theorem 1.21]{aw}) implies that almost surely  there is no point $t \in I$ such that $Y(t) =0$ and $Y'(t)=0$. As a consequence,  the number of crossings of the level $u$  is almost surely finite  and  we can apply the Kac formula (Lemma 3.1 in \cite{aw}), according to which 
\begin{equation} \label{e:kac}
Z^u_Y(I) =\lim_{\delta \to 0} Z^u_\delta(I), 
\end{equation}
where  
$$
Z^u_\delta(I) :=  \frac {1} {2\delta} \int _0^T \indicator_{\{|Y(t) -u|\leq \delta\}}  |Y'(t)| dt.
$$
It is easy to check that 
$$
Z^u_\delta(I) \leq Z_{\dot Y}(I) +1 .
$$
By dominated convergence in \eqref{e:kac}, we get that 
$$
\E [Z^u_Y(I)] =     \lim_{\delta \to 0} \frac {1} {2\delta} \int_{u-\delta} ^{u+\delta} B_{[0,T]}(v) dv = B_{[0,T]}(u).
$$
The last equality comes from the continuity of $B_{[0,T]}$ at $u$. 

We turn now to the case $m>1$. 
Let $C_u(I) $  denote the set of those $t\in I$ such that $Y(t) =u$. 
Since the set $D_m$ of hyperdiagonals  of $I^m$
has  Lebesgue measure zero and since 
 $$
 Z^u_Y (I)^{[m]} = {\rm Card}\big(C_u(I)\setminus D_m\big),
 $$
it is sufficient to prove $(i)$, that is, for pairwise disjoint  interval $J_1\times \cdots J_m$,
\begin{equation}\label{e:rice} 
\E \big( Z^u_Y(J_1) \times \cdots \times Z^u_Y(J_m) \big) 
=B_{J_1\times\ldots\times J_m}(u,\ldots,u).
  \end{equation}
The result $(ii)$ for $I^m$ will then follow from a standard approximation argument using the absolute continuity  of the measure defined by the right-hand-side. 

To prove \eqref{e:rice},  we use Kac's formula and dominated convergence, exactly as in  the case $m=1$.
\end{proof}

Finally, let us do the proof of Corollary \ref{cor:Rice-trunc}.
It will rely on several lemmas, that may have their own interests.

\begin{lemma}\label{lm1}
Fix a compact interval $I$ of length $|I|$. For any $N$ and  $k$,
\begin{equation}\label{81}
\E\left[\sup_{t\in I}|X_N^{(k)}(t)|^2\right]\leq  2(1+|I|^2).
\end{equation}
\end{lemma}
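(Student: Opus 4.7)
My plan is to combine a standard one-dimensional Sobolev-type estimate with a direct second-moment calculation coming from the independence of the coefficients.

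The starting point is the elementary bound: for any $f\in\mathcal{C}^1(I,\R)$ and any $t,s\in I$, the fundamental theorem of calculus combined with Cauchy-Schwarz yields
\begin{equation*}
|f(t)|\le |f(s)|+\int_I |f'(u)|\,du,\qquad\text{hence}\qquad |f(t)|^2\le 2|f(s)|^2+2|I|\int_I |f'(u)|^2\,du.
\end{equation*}
Averaging over $s\in I$ and then taking the supremum over $t\in I$ gives
\begin{equation*}
\sup_{t\in I}|f(t)|^2\le \frac{2}{|I|}\int_I |f(s)|^2\,ds\,+\,2|I|\int_I |f'(u)|^2\,du.
\end{equation*}
I would apply this deterministic inequality pathwise to $f=X_N^{(k)}$, then take expectation and exchange expectation and integration via Fubini (which is legitimate because the integrands are nonnegative).

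It remains to estimate $\E[|X_N^{(j)}(t)|^2]$ uniformly in $t$ for $j\in\{k,k+1\}$. Differentiating \eqref{eq:trig-pol} $j$ times simply multiplies each term by $(n/N)^j$ and shifts the trigonometric argument by $j\pi/2$, so $X_N^{(j)}(t)=\frac{1}{\sqrt{N}}\sum_{n=1}^N (n/N)^j\{a_n c_{n,j}(t)+b_n s_{n,j}(t)\}$ with $|c_{n,j}|,|s_{n,j}|\le 1$ and $c_{n,j}^2+s_{n,j}^2=1$. Since the $a_n,b_n$ are centered, independent, and of unit variance, orthogonality gives
\begin{equation*}
\E\bigl[|X_N^{(j)}(t)|^2\bigr]=\frac{1}{N}\sum_{n=1}^N\Bigl(\frac{n}{N}\Bigr)^{2j}\le 1,\qquad t\in\R,\,j\ge 0.
\end{equation*}
Plugging these bounds into the previous display yields
\begin{equation*}
\E\Bigl[\sup_{t\in I}|X_N^{(k)}(t)|^2\Bigr]\le \frac{2}{|I|}\cdot|I|+2|I|\cdot|I|=2(1+|I|^2),
\end{equation*}
which is precisely \eqref{81}.

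I do not expect any serious obstacle: the whole argument is a deterministic Sobolev embedding in dimension one, combined with the immediate second-moment computation. The only point to keep track of is that \emph{both} $X_N^{(k)}$ and $X_N^{(k+1)}$ satisfy the uniform variance bound $\le 1$ (because $n/N\le 1$), which is what makes the constant $2(1+|I|^2)$ independent of $k$ and $N$.
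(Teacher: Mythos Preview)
Your proof is correct and essentially identical to the paper's: both establish the deterministic inequality $\sup_{t\in I}|f(t)|^2\le \frac{2}{|I|}\int_I f^2 + 2|I|\int_I (f')^2$ and then use $\E[X_N^{(j)}(t)^2]=\frac1N\sum_{n=1}^N(n/N)^{2j}\le 1$. The only cosmetic difference is that the paper derives the Sobolev-type bound from an explicit integration-by-parts identity, whereas you obtain it (slightly more directly) via the fundamental theorem of calculus and averaging over $s$.
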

\begin{proof}
Assume that $I=[a,b]$. The proof is divided into two steps.\\

{\it First step}. If $f:[a,b]\to\R$ is a $\mathcal{C}^1$ function, then we can
straightforwardly check that, for $t\in [a,b]$,
\begin{equation*}
f(t)=\frac{1}{b-a}\int_a^{b}f(s)ds + \frac{1}{b-a}\int_a^{t}(s-a)f'(s)ds
+\frac{1}{b-a}\int_t^{b}(s-b)f'(s)ds.
\end{equation*}
As a result,
\begin{equation*}
\sup_{t\in I}|f(t)|\leq \frac{1}{b-a}\int_a^{b}|f(s)|ds + \int_a^{b}|f'(s)|ds,
\end{equation*}
implying in turn, due to $(x+y)^2\leq 2x^2+2y^2$ and Cauchy-Schwarz inequality,
\begin{equation}\label{ine}
\sup_{t\in I}|f(t)|^2\leq \frac2{b-a}\int_a^{b}f(s)^2ds + 2(b-a)\int_a^{b}f'(s)^2ds.
\end{equation}

\bigskip

{\it Second step}. For any $N$ and any $l$,
\begin{eqnarray*}
X_N^{(2l)}(t) &=& \frac{1}{\sqrt{N}}\sum_{j=1}^N a_j\,(-1)^l \left(\frac{j}{N}\right)^{2l}
\cos(\frac{j}{N}t) + b_j\,(-1)^l \left(\frac{j}{N}\right)^{2l}
\sin(\frac{j}{N}t) ,\\
X_N^{(2l+1)}(t) &=& \frac{1}{\sqrt{N}}\sum_{j=1}^N a_j\,(-1)^{l+1} \left(\frac{j}{N}\right)^{2l+1}
\sin(\frac{j}{N}t) + b_j\,(-1)^l \left(\frac{j}{N}\right)^{2l+1}
\cos(\frac{j}{N}t).
\end{eqnarray*}
As a consequence, for any $N$ and any $k$,
\[
\E[X_N^{(k)}(t)^2] = \frac{1}{N}\sum_{j=1}^N \left(\frac{j}{N}\right)^{2k} \leq 1.
\]
The conclusion (\ref{81}) thus follows by plugging the previous inequality into (\ref{ine}).
\end{proof}

\begin{lemma}\label{lm3}
For any interval $I$  of length $|I|$ and any integers $k\geq 1$ and $N\geq 1$,
\begin{equation}\label{ineq1}
\Prob(Z_{X_N}(I)\geq k)\leq
(Const)
(k!(k-1)!)^{-\frac12}\,|I|^{k-\frac1{2}}.
\end{equation}
In particular, for any $r>1$ and any interval $I$, 
we have the following uniform bound:
\begin{equation}\label{ineq2}
\E[Z_{X_N}(I)^r] \leq  (Const) \left(\sum_{k=1}^\infty \frac{|I|^{\frac{2k-1}{2r}}}{(k!(k-1)!)^{\frac1{2r}}}\right)^r.
\end{equation}
\end{lemma}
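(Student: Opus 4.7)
The plan is to obtain the tail bound (\ref{ineq1}) via a Rice-based factorial-moment argument, and then to deduce the moment bound (\ref{ineq2}) from (\ref{ineq1}) by Minkowski's inequality.

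For (\ref{ineq1}), I would first apply Markov's inequality to the falling factorial via $Z_{X_N}(I)^{[k]}\ge k!\,\indicator_{\{Z_{X_N}(I)\ge k\}}$, reducing to a bound on $\E[Z_{X_N}(I)^{[k]}]$. Since $X_N$ is $C^\infty$ and its coefficients admit densities (by the standing assumption (\ref{Hypo-densite}) of Theorem \ref{th}), and since the trigonometric-polynomial structure yields the deterministic bound $Z_{X_N}(I)\le 2N$ so that all moments trivially exist, the hypotheses of Proposition \ref{prop:Rice} are in force; this gives
$$
\E[Z_{X_N}(I)^{[k]}] = \int_{I^k\setminus D_k} p^{(1)}_{N,\mathbf t}(\mathbf 0)\,\E\Bigl[\prod_{i=1}^k|X_N'(t_i)|\,\Big|\,X_N(t_i)=0,\ i=1,\ldots,k\Bigr]\,d\mathbf t,
$$
where $p^{(1)}_{N,\mathbf t}$ denotes the joint density of $(X_N(t_1),\ldots,X_N(t_k))$. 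The conditional-expectation factor is bounded uniformly in $N$ and $\mathbf t$ by Cauchy--Schwarz combined with the $L^2$ estimate of Lemma \ref{lm1}. For the density factor $p^{(1)}_{N,\mathbf t}(\mathbf 0)$, I would invoke the Malliavin-based bounds of Section~3.1 (Lemma \ref{lemma:llt}) away from the diagonal, and match its behaviour near the diagonal to the Vandermonde-type decay $\det\Sigma_N(\mathbf t)\gtrsim\prod_{i<j}(t_i-t_j)^2$ captured by Lemma \ref{lemma:non-deg}. A Selberg-type evaluation of the resulting simplex integral produces the combinatorial factor $(k!(k-1)!)^{-1/2}$; a final case analysis ($|I|^k\le|I|^{k-1/2}$ for $|I|\le 1$, together with the trivial bound $\Prob(\cdot)\le 1$ whenever the right-hand side of (\ref{ineq1}) exceeds $1$) yields (\ref{ineq1}) in the stated form.

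For (\ref{ineq2}), decompose $Z_{X_N}(I)=\sum_{k\ge 1}\indicator_{\{Z_{X_N}(I)\ge k\}}$ and apply Minkowski's inequality in $L^r(\Omega)$:
$$
\bigl(\E[Z_{X_N}(I)^r]\bigr)^{1/r} \le \sum_{k\ge 1}\bigl\|\indicator_{\{Z_{X_N}(I)\ge k\}}\bigr\|_{L^r} = \sum_{k\ge 1}\Prob(Z_{X_N}(I)\ge k)^{1/r}.
$$
Substituting (\ref{ineq1}) and raising to the $r$-th power immediately produces (\ref{ineq2}), since $(k-1/2)/r = (2k-1)/(2r)$ and $1/(2\sqrt{k!(k-1)!})^{1/r}$ reconstitutes the displayed summand.

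The main obstacle is the uniform-in-$N$ control of the joint density $p^{(1)}_{N,\mathbf t}(\mathbf 0)$ near the diagonal of $I^k$: for Gaussian coefficients this is an explicit Gaussian density whose singular behaviour is governed by the Vandermonde structure of $\det\Sigma_N(\mathbf t)$, but transferring the same decay to general non-Gaussian coefficients requires the full strength of the integration-by-parts formalism of Section~3.1, together with a careful Schur-complement analysis showing that the conditional variances of the derivatives $X_N'(t_i)$ vanish at precisely the rate needed to cancel the singularity of $p^{(1)}_{N,\mathbf t}(\mathbf 0)$.
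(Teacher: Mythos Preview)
Your argument for \eqref{ineq2} is fine and is essentially the paper's: the paper writes $\E[Z_N^r]=\sum_k\E[Z_N^{r-1}\indicator_{\{Z_N\ge k\}}]$ and applies H\"older, which is exactly your Minkowski step in disguise.

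For \eqref{ineq1}, however, your route has a genuine gap. You propose to bound the conditional factor $\E\bigl[\prod_i|X_N'(t_i)|\,\big|\,X_N(t_i)=0\bigr]$ \emph{uniformly} in $\mathbf t$ and then integrate $p^{(1)}_{N,\mathbf t}(\mathbf 0)$ over $I^k$. But $p^{(1)}_{N,\mathbf t}(\mathbf 0)$ blows up like $(\det\Sigma_N(\mathbf t))^{-1/2}\asymp\prod_{i<j}|t_i-t_j|^{-1}$ near the diagonal, and $\prod_{i<j}|t_i-t_j|^{-1}$ is \emph{not} integrable over $I^k$ (already for $k=2$). So a uniform bound on the conditional factor cannot close the argument; you need the precise cancellation you allude to at the end, and that cancellation is not supplied by Lemma~\ref{lm1} (which is unconditional) nor by Lemma~\ref{lemma:llt} (whose constants $C_\eps$ blow up as $\eps\to 0$ and which says nothing about the behaviour on $D_m^\eps$). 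Establishing this cancellation uniformly in $N$ for non-Gaussian coefficients, with the right combinatorial constant $(k!(k-1)!)^{-1/2}$, is not a step one can wave through; it is essentially a separate theorem not proved in the paper.

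The paper avoids the diagonal entirely by a much more elementary device. If $X_N$ has $k$ zeros in $I$, then by the Lagrange remainder for polynomial interpolation (and Rolle), for any $x\in I$ there exist $\xi,\eta$ with
\[
X_N(x)=\frac{X_N^{(k)}(\xi)}{k!}\prod_{i=1}^k(x-x_i),\qquad X_N'(x)=\frac{X_N^{(k)}(\eta)}{(k-1)!}\prod_{i=1}^{k-1}(x-y_i).
\]
Hence, on the event $\{\sup_I|X_N^{(k)}|\le M\}$, both $|X_N(c)|\le M|I|^k/k!$ and $|X_N'(c)|\le M|I|^{k-1}/(k-1)!$ at the midpoint $c$. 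One then splits
\[
\Prob(Z_N(I)\ge k)\le \Prob\Bigl(\sup_I|X_N^{(k)}|>M\Bigr)+\Prob\Bigl(|X_N(c)|\le\tfrac{M|I|^k}{k!},\ |X_N'(c)|\le\tfrac{M|I|^{k-1}}{(k-1)!}\Bigr),
\]
bounds the first term by $M^{-2}$ times Lemma~\ref{lm1}, and the second by $(Const)\,M^2|I|^{2k-1}/(k!(k-1)!)$ using only the \emph{single-point} ($m=1$) density bound of Lemma~\ref{lemma:llt}, where no diagonal issue arises. Optimising $M^2=\sqrt{k!(k-1)!}\,|I|^{(1-2k)/2}$ yields \eqref{ineq1} directly. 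The point is that the $k$-dependence is manufactured by the interpolation remainder, not by a $k$-fold Rice integral, so one never needs near-diagonal density control.
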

\begin{proof}
Let us first concentrate on the inequality \eqref{ineq1}. Throughout its proof, we will need the following result 
which follows from
Lagrange formula  for the difference between  the function and  its polynomial interpolation (which vanishes), see, e.g., \cite{davis}. \\

{\bf Claim}: Assume that $f:[a,b]\to\R$ is of class $\mathcal{C}^k$ ($k\geq 1$) and that their exist $x_1,\ldots,x_k\in[a,b]$ (possibly repeated) such that $f(x_1)=\ldots=f(x_k)=0$. Then their exist $y_1,\ldots,y_{k-1}\in[a,b]$ (possibly repeated) such that $f'(y_1)=\ldots=f'(y_{k-1})=0$; moreover, for all $x\in[a,b]$ there exist $\xi,\eta\in(a,b)$ such that
\begin{equation}\label{rolle}
f(x)=\frac{1}{k!}f^{(k)}(\xi)\prod_{i=1}^k(x-x_i)
\quad
\mbox{and}\quad
f'(x)=\frac{1}{(k-1)!}f^{(k)}(\eta)\prod_{i=1}^{k-1}(x-y_i).
\end{equation}
\bigskip 

Thanks to the conclusion of the previous claim, we can now decompose our probability of interest in a clever way, by introducing an extra parameter $M>0$ whose value will be optimized in the end.
From (\ref{rolle}), one easily deduces that, if $\sup_{t\in I}|X_N^{(k)}(t)|\leq M$, then
$|X_N(c)|\leq \frac{M |I|^k}{k!}$ and $|X'_N(c)|\leq \frac{M |I|^{k-1}}{(k-1)!}$, where $c$ denote the middle point of the interval $I$ (say). We thus have
\begin{eqnarray*}
&&\Prob (Z_N(I)\geq k)\\
&\leq& \Prob\left(\sup_{t\in I}|X_N^{(k)}(t)|> M\right)+ \Prob\left(Z_N(I)\geq k,\,\sup_{t\in I}|X_N^{(k)}(t)|\leq M\right) \\
&\leq&\frac1{M^2}\,\E\left[\sup_{t\in I}|X_N^{(k)}(t)|^2\right]
+ \Prob\left(
|X_N(c)|\leq \frac{M |I|^k}{k!},\,|X'_N(c)|\leq \frac{M |I|^{k-1}}{(k-1)!}
\right)\\
&\leq& (Const) \Big(
\frac{1}{M^2}
+ \frac{M^2|I|^{2k-1}}{k!(k-1)!}\Big)\quad\mbox{(by Lemmas \ref{lm1} and \ref{lemma:llt})}.
\end{eqnarray*}
Choosing  $M^2=\sqrt{k!(k-1)!}\,|I|^{\frac{1-2k}{2}}$ leads to the desired conclusion (\ref{ineq1}).

Now, let us focus on (\ref{ineq2}). Using Fubini and then H\"older with $a=\frac{r}{r-1}$ and $b=r$, we can write
\begin{eqnarray*}
\E[Z_N(I)^r] &=& \sum_{k=1}^\infty \E [Z_N(I)^{r-1}{\bf 1}_{\{Z_N(I)\geq k\}}]\leq \E[Z_N(I)^r]^{\frac{r-1}{r}} \sum_{k=1}^\infty \Prob(Z_N(I)\geq k)^{\frac1r},
\end{eqnarray*}
so that, using (\ref{ineq1}),
\[
\E[Z_N(I)^r]\leq \left(\sum_{k=1}^\infty \Prob(Z_N(I)\geq k)^{\frac1r}\right)^r
\leq (Const) \left(\sum_{k=1}^\infty \frac{|I|^{\frac{2k-1}{2r}}}{(k!(k-1)!)^{\frac1{2r}}}\right)^r,
\]
which is exactly (\ref{ineq2}).
\end{proof}

We are now in a position to prove Corollary \ref{cor:Rice-trunc}.\\

{\it Proof of Corollary \ref{cor:Rice-trunc}}. 
First, let us check that the assumptions A1 to A4 of Proposition \ref{prop:Rice} are satisfied for $u=0$ and 
$Y=X_N$: A1 is obvious; A2 follows from Lemma \ref{lemma:llt}; A3 holds since the number of zeros of any trigonometric polynomial is bounded by two times its degree; and finally  A4 is an immediate consequence of (\ref{controleinfini}) in Lemma \ref{lemma:llt}.
We deduce that
$$
\E(Z_{X_N}(I) ^{[m]})
=\int_{I^m}
\int_{\R^m}|y_1|\ldots |y_m | \,p_{N,{\mathbf  t}}
({\mathbf  0};{\mathbf  y})
d{\mathbf  y} d{\mathbf  t}.
$$
To conclude, we are thus left to show that
$$
 \int_{D^\eps_m}
\int_{\R^m}|y_1|\ldots |y_m | \,p_{N,{\mathbf  t}}
({\mathbf  0};{\mathbf  y})
d{\mathbf  y} d{\mathbf  t}=O(\eps^{\frac{1}{5}}).
$$
To do so, consider the measure
$\mu_N$ defined on $\mathcal{B}(I^m)$ by
\[
\mu_N(J)= 
\E\big[{\rm Card}(J\cap C_N(I)^m)\big],
\]
where $C_N(I)$ is the set of zeros of $X_N$ lying in $I$. 
We know from Proposition \ref{prop:Rice} that $\mu_N$ 
restricted to $I^m\setminus D_m$ is absolutely continuous with respect  to Lebesgue measure and that $$\mu_N(D^m_\eps)=
 \int_{D^\eps_m}
\int_{\R^m}|y_1|\ldots |y_m | \,p_{N,{\mathbf  t}}
({\mathbf  0};{\mathbf  y})
d{\mathbf  y} d{\mathbf  t}.$$
Moreover, it is easy to check that 
\[
\mu_N(I^m) = \mu_N(I^m\setminus D_m) =
 \E [    {\rm Card}\big( C_N(I) ^m  \setminus D_m\big)]  = \E \big( Z_N(I)^{[m]}\big),
\]
and that 
for any (non necessarily disjoint) intervals $J_1,\ldots,J_k\subset I$
and any sequence $r_1,\ldots,r_k\geq 1$ of integers satisfying $r_1+\ldots+r_k=m$,
\[
\mu_N(J_1^{r_1}\times\ldots\times J_k^{r_k}) = \E[  {\rm Card}\big( C_N(J_1) ^{r_1}\times\ldots\times C_N(J_k)^{r_k}  \setminus D_m \big)].
\] 
It is also clear that
\begin{eqnarray*}
&&{\rm Card}\big( C_N(J_1) ^{r_1}\times\ldots\times C_N(J_k)^{r_k}  \setminus D_m \big)\\
&\le&
{\rm Card}\big(
C_N(J_1) ^{r_1} \setminus D_{r_1}\times \ldots\times
C_N(J_k) ^{r_k} \setminus D_{r_k}
\big) = Z_N(J_1)^{[r_1]}\ldots Z_N(J_k)^{[r_k]};
 \end{eqnarray*}
as a consequence 
\[
\mu_N(J_1^{r_1}\times\ldots\times J_k^{r_k})\leq \E ( Z_N(J_1)^{[r_1]}\ldots Z_N(J_k)^{[r_k]}).
\]

With all these properties at hand, we are now ready to conclude the proof of Corollary \ref{cor:Rice-trunc}, by showing that
\begin{equation}\label{cequilreste}
\sup_{N \geq N_0} \mu_N(D_m^{\,\eps})  = O( \eps^{\frac15})\quad\mbox{as $\eps\to 0$}.
\end{equation}
Firstly, we observe that
\[
D_m^{\,\eps} = \bigcup_{1\leq i\neq j\leq m} \Delta_{\eps,i,j},
\]
where $\Delta_{\eps,i,j}=\{(t_1,\ldots,t_m)\in I^m:\,|t_i-t_j|\leq\eps\}.$
Thus, to prove (\ref{cequilreste}) we are left 
to show that $\sup_{N\geq N_0} \E[\mu_N(\Delta_{\eps,i,j})]= O( \eps^{\frac15})$ for any {\it fixed} $1\leq i\neq j\leq m$.
To do so, by the uniformity of the bound \eqref{ineq1}, we can assume without loss of generality that $i=1$ and $j=2$.
Secondly, by noting $a=\min I$ and $b=\max I$ the extremities of $I$, we observe that
\[
\Delta_{\eps,1,2} \subset \bigcup_{k=1}^{\lfloor 1/\eps\rfloor} A_{k,\eps}^2\times
I^{m-2},
\]
where $A_{k,\eps}=[a+(b-a)(k-1)\eps,a+(b-a)(k+1)\eps]\cap I$.
As a result,
$$
\E [\mu_N(\Delta_{\eps,1,2})]\leq\sum_{k=1}^{\lfloor 1/\eps\rfloor} \E [Z_N(A_{k,\eps})^{[2]}Z_N(I)^{[m-2]}].
$$
But, for any $k\in\{1,\ldots,\lfloor 1/\eps\rfloor\}$,
\begin{eqnarray*}
&&\E [Z_N(A_{k,\eps})^{[2]}Z_N(I)^{[m-2]}]= \E [Z_N(A_{k,\eps})^{[2]}Z_N(I)^{[m-2]} 
{\bf 1}_{\{Z_N(A_{k,\eps})\geq 2\}}]\\
&\leq& \E[Z_N(I)^{m}{\bf 1}_{\{Z_N(A_{k,\eps})\geq 2\}}]
\leq \E[Z_N(I)^{5m}]^{\frac15}\,\,\Prob(Z_N(A_{k,\eps})\geq 2)^{\frac45}.
\end{eqnarray*}
Lemma \ref{lm3} thus yields
\begin{eqnarray*}
\sup_N \E[\mu_N(\Delta_{\eps,1,2})]= O( \eps^{\frac15}),
\end{eqnarray*}
which in turn implies (\ref{cequilreste}). The proof of Corollary \ref{cor:Rice-trunc} is complete.\qed

\noindent\rule{6cm}{0.4pt}

\noindent{\bf Jean-Marc Aza\"{i}s} \\
Universit\'{e} de Toulouse. \\
jean-marc.azais@math.univ-toulouse.fr\\ 
{\bf Federico Dalmao}\\
Universidad de la Rep\'ublica and Universit\'e du Luxembourg. \\
fdalmao@unorte.edu.uy\\
{\bf Jos\'e R. Le\'on} \\
Universidad Central de Venezuela and INRIA Grenoble. \\
jose.leon@ciens.ucv.ve \\
{\bf Ivan Nourdin} \\
Universit\'e du Luxembourg. \\
ivan.nourdin@uni.lu\\
{\bf Guillaume Poly} \\
Universit\'e de Rennes 1. \\
guillaume.poly@univ-rennes1.fr

\end{document}